\newtheorem{theorem}{Theorem}[section]
\newtheorem{definition}[theorem]{Definition}
\newtheorem{proposition}[theorem]{Proposition}
\newtheorem{observation}[theorem]{Observation}
\newtheorem{corollary}[theorem]{Corollary}
\newtheorem{remark}[theorem]{Remark}
\newtheorem{problem}[theorem]{Problem}
\newtheorem{claim}{Claim}
\newcommand{\proof}{\noindent{\bf Proof.\ }}
\newcommand{\qed}{\hfill $\square$\bigskip}
\newcommand{\gsmb}{\gamma_{\rm SMB}}
\newcommand{\smallqed}{{\tiny ($\Box$)}}
\newcommand{\cH}{{\cal H}}
\newcommand{\cF}{{\cal F}}
\newcommand{\cC}{{\cal C}}
\newcommand{\cS}{{\cal S}}
\newcommand{\cN}{{\cal N}}
\newcommand{\cA}{{\cal A}}
\newcommand{\cX}{{\cal X}}
\newcommand{\cL}{{\cal L}}
\begin{document}

\title{Criticality for Maker-Breaker domination games with predomination}
\date{}

\author{Csilla Bujt\'as$^{a,b,}$\thanks{Email: \texttt{csilla.bujtas@fmf.uni-lj.si}} 
	\hspace{1cm} 
    Pakanun Dokyeesun$^{b,}$\thanks{Email: \texttt{papakanun@gmail.com}} \\ \\ 
Sandi Klav\v zar$^{a,b,c,}$\thanks{Email: \texttt{sandi.klavzar@fmf.uni-lj.si}}
       \hspace{1cm} 
       Milo\v{s} Stojakovi\'{c} $^{d,}$\thanks{Email: \texttt{milos.stojakovic@dmi.uns.ac.rs}}  
}
\maketitle

\begin{center}
	$^a$ Faculty of Mathematics and Physics, University of Ljubljana, Slovenia\\
	\medskip

	$^b$ Institute of Mathematics, Physics and Mechanics, Ljubljana, Slovenia\\
	\medskip
	
	$^c$ Faculty of Natural Sciences and Mathematics, University of Maribor, Slovenia\\
	\medskip

        $^d$ Department of Mathematics and Informatics, Faculty of Sciences, \\University of Novi Sad, Serbia\\
\medskip

\end{center}
\begin{abstract}
A predominated graph is a pair $(G,D)$, where $G$ is a graph and the vertices in $D\subseteq V(G)$ are considered already dominated. Maker-Breaker domination game critical (MBD critical) predominated graphs are introduced as the predominated graphs $(G,D)$ on which Staller wins the game, but Dominator wins on $(G, D \cup \{v\})$ for every vertex $v \in V(G) \setminus D$. 

Tools are developed for handling the Maker-Breaker domination game on trees which lead to a characterization of Staller-win predominated trees. MBD critical predominated trees are characterized and an algorithm is designed which verifies in linear time whether a given predominated tree is MBD critical. A large class of MBD critical predominated cacti is presented and Maker-Breaker critical hypergraphs constructed. 
\end{abstract}
\noindent
{\bf Keywords:} domination game; Maker-Breaker game; Maker-Breaker domination game; predomination; hypergraph \\

\noindent
{\bf AMS Subj.\ Class.\ (2020)}: 05C57, 05C65, 05C69

\section{Introduction}

\noindent\textbf{Positional games.}
Positional games form a specific and well studied subclass of combinatorial games, that includes popular recreational games like Tic-Tac-Toe, Hex and Sim. They have been explored in depth in two books~\cite{beck-book, hefetz-2014}. Structurally, a positional game is a hypergraph $\cH=(V,E)$, where $V$ is a finite set representing the \emph{board} of the game, and $E \subseteq 2^V \setminus \{\emptyset\}$ is a family of sets that we refer to as the \emph{winning sets}. The game is played by two players who alternately claim the unclaimed elements of the board, until all of them are claimed.

When it comes to the game's outcome, there are several standard conventions. Here, we highlight two of the most significant ones, which were also the first to be introduced and studied, see seminal and pioneering papers of Hales and Jewett~\cite{hales-1963} and Erd\H{o}s and Selfridge~\cite{erdos-1973}. In a \emph{strong game}, the first player to claim all elements of a winning set is the winner, and if the game finishes without a winner a draw is declared. In a \emph{Maker-Breaker game}, the players are called Maker and Breaker, and there are only two possible outcomes of the game; Maker's goal is to claim all elements of a winning set, while Breaker wins otherwise, i.e.\ if Breaker claims an element in every winning set. If not specified otherwise, we will assume that Maker starts the game.

One of the central general questions about positional games is the monotonicity with respect to the edge set of the game hypergraph. It was observed early on by Beck (see, e.g.,~\cite[Section 5]{beck-book}) that adding an extra winning set to a strong game can change the outcome in either direction -- a draw can turn into a first player's win, but also a first player's win can turn into a draw. Beck referred to this phenomenon as the `Extra Set Paradox', highlighting that this lack of monotonicity of strong games might initially seem surprising, and in sharp contrast with the well-known monotonicity of Maker-Breaker games. Indeed, it is both folklore and an observation that adding an extra winning set to a Maker's win game always results in a Maker's win game, as Maker can simply win by applying the exact same winning strategy as before.

Once this general monotonicity of Maker-Breaker games is established, a standard topic of interest in extremal combinatorics is the study of critical hypergraphs -- those games $(V,E)$ that are a win for Maker, but where removing any edge $e\in E$ results in a game $(V,E\setminus \{e\})$ that is a win for Breaker. This topic is also central to our investigation in the present paper.
\medskip

\textbf{Maker-Breaker domination game.}
A substantial amount of research on positional games have been conducted on games on graphs, where the board consists of either the edge set or the vertex set of a graph. Games played on the edges of graphs are more prevalent in the literature; see~\cite{hefetz-2014} for an overview. This is partly due to the broader range of games that can be studied in this setting, including those on complete graphs, first analyzed by Chvátal and Erdős~\cite{chvatal-1978}.

When it comes to Maker-Breaker games on vertices, playing on complete graphs is meaningless, as there is no structure to play for -- every vertex-induced graph is itself a complete graph. Consequently, there are considerably fewer games for which the outcome is known (or it is efficiently computable, given the base graph on which the game is played).

One notable exception is the domination game, which naturally arises from the well-studied concept of domination on graphs. Given a graph $G$, the board of the game is the vertex set $V(G)$, while the winning sets are the closed neighborhoods of all vertices in $G$. Maker's goal is to claim a closed neighborhood, and Breaker wants to prevent that, which is equivalent to claiming a dominating set of $G$. This game was first analyzed in~\cite{duchene-2020}, and its various aspects have been further looked at in~\cite{bagan-2025, bujtas-2024, bujtas-2023, gledel-2019}.
\medskip

\textbf{Classical domination game.} 
The Maker-Breaker domination game can be considered a ``younger sibling'' of a well-established combinatorial game on graphs, widely studied in the literature under the same name: the domination game, let us call it here the classical domination game in order to distinguish it from the (Maker-Breaker) domination game. 

The two games are closely related in several ways. Among these, it inherits the game hypergraph structure -- it also revolves around the concept of a dominating set.
In this game, two players, Staller and Dominator, alternately claim unclaimed vertices of a given graph $G$, \emph{jointly} building a dominating set. Each claimed vertex must expand the set of dominated vertices, and the game concludes once the vertices claimed by both players form a dominating set. Staller aims to prolong the game as much as possible, while Dominator seeks to minimize its duration. 

A central question is determining the game's length under optimal play for a given graph $G$. The classical domination game was introduced in~\cite{bresar-2010}, for a comprehensive overview of this and related problems, see the book~\cite{book-2021}. Because in this article we focus on the criticality of games, we emphasize that graphs critical for the classical domination game have been investigated in~\cite{bujtas-2015, dorbec-2019, xu-2018}, and graphs critical for the total domination game in~\cite{charoensitthichai-2020, henning-2018a, henning-2018b, worawannotai-2024}. For more detail about total domination game see~\cite{henning-2015} and ~\cite[Sections 2.9 and 3.6]{book-2021}. It is also important to point to~\cite{bujtas-2021} for a closely related concept of perfect graphs for domination games. 

A natural generalization of this game introduces a \emph{predominated} set of vertices $D\subseteq V(G)$, where the game play remains unchanged except that the vertices in $D$ are considered already dominated -- the game on $(G,D)$ concludes once all vertices in $V(G)\setminus D$ are dominated. The concept of predomination allows for the study of \emph{critical} pairs $(G,D)$, where the duration of the game on $(G,D)$ is strictly longer than on $(G, D\cup\{x\})$, for all $x\in V(G)\setminus D$. The effect of predomination on the total domination game and on the connected domination game was respectively investigated in~\cite{irsic-2019} and in~\cite{bujtas-2022, irsic-2022}. 
\medskip

\textbf{Our concepts.} We now return to the Maker-Breaker domination game, introducing and studying predomination in an analogue way. To align with the standard terminology established for the classical domination game, we will refer to Closed-Neighborhood-Maker as Staller and Closed-Neighborhood-Breaker as Dominator. This also helps to avoid potential confusion, as Closed-Neighborhood-Maker is simultaneously Dominating-Set-Breaker.

In a Maker-Breaker domination game played on $(G,D)$, Dominator wins if the vertices he claims dominate all vertices in $V(G) \setminus D$, while Staller wins if she manages to claim all the vertices in the closed neighborhood $N[v]$ of some vertex $v \notin D$. Note that the vertices in $D$ are part of the game board, they can be played during the game and may belong to the closed neighborhoods of vertices outside $D$. Therefore, a Maker-Breaker domination game on $(G,D)$ in general differs from the game played on on $G-D$. Also, note that choosing $D = \emptyset$ is a valid option. Throughout this paper, we assume that Staller starts the game.

A predominated graph $(G,D)$ is called \emph{Maker-Breaker domination game critical} ({\em MBD critical}, or just {\em critical}) if Staller wins the game on $(G,D)$, but Dominator wins on $(G, D \cup \{v\})$ for every vertex $v \in V(G) \setminus D$. Furthermore, note that if $(G,D)$ is MBD critical and $D' \subsetneq  D$, then $(G,D')$ is not MBD critical. Similarly, if $D\subsetneq D''$, then $(G,D'')$ is also not MBD critical. If Staller wins the game on $(G,D)$, then $D$ can be extended to a set $D'\supseteq D$ such that $(G,D')$ is a critical predominated graph.  
\medskip

{\textbf{Main results.} We develop tools for handling the domination game with predomination on trees, which enable us to give a characterization of Staller-win predominated trees in Theorem~\ref{thm:tree-St-wins}.  Furthermore, we characterize all MBD critical predominated trees in Theorem~\ref{thm:critical-trees}, and we show that the necessary and sufficient conditions for a given predominated tree to be MBD critical are verifiable in linear time in Theorem~\ref{thm:critical-tree-linear}. Then we turn to cacti and construct in Theorem~\ref{thm:cC} a large class of (atomic) MBD critical predominated cacti. The latter approach is extended in Theorem~\ref{thm:cA} to obtain a wider class of (atomic) MBD critical predominated graphs. Finally, in Proposition~\ref{prop:cactus-hg}, Maker-Breaker critical hypergraphs are constructed. 

\medskip

\textbf{Paper organization.} 
In Section~\ref{sec:critical-hgs}, we formally and comprehensively present the Maker-Breaker game critical hypergraphs, while Section~\ref{sec:MBD-critical-graphs} focuses on the MBD critical graphs, along with some of their general properties. Section~\ref{sec:MBD-critical-trees} is dedicated to characterizing MBD critical trees, whereas Section~\ref{sec:MBD-critical-cactuses} analyzes MBD critical cactuses. Finally, in Section~\ref{sec:families-MB-critical-hgs}, we outline potential directions for future work, including a more detailed exploration of three promising avenues: the search for additional families of MBD critical graphs, the concept of criticality for Dominator, and the general study of Maker-Breaker critical hypergraphs.

\subsection{Preliminaries}

A hypergraph $\cH'=(V', E')$ is a \emph{subhypergraph} of $\cH=(V,E)$, if $V' \subseteq V$ and $E' \subseteq E$. In a hypergraph $\cH$, the {\em degree} of a vertex is the number of incident edges, in particular, a vertex of degree $0$ is an {\em isolated} vertex.   A hypergraph is \emph{$k$-uniform} if every hyperedge $e \in E$ contains exactly $k$ vertices.

Given a graph $G$, its \emph{closed neighborhood hypergraph} $\cN(G)$ is defined on the vertex set $V(G)$ with hyperedges corresponding to the closed neighborhoods $N_G[v]$ for every $v \in V(G)$. For a predominated graph $(G,D)$, the hypergraph $\cN(G,D)$ is defined on the vertex set $V(G)$ and contains the closed neighborhoods of the non-predominated vertices:
$$ E(\cN(G,D))= \{N_G[v] : v \in V(G) \setminus D \}. $$


\section{Maker-Breaker game critical hypergraphs}
\label{sec:critical-hgs}

Here, we define criticality for Maker-Breaker games in general.  We impose no restrictions on the game hypergraph, and in particular we allow the presence of isolated vertices in $V$. 

\begin{definition}
   A hypergraph $\cH=(V,E)$ is \emph{Maker-Breaker critical} if Maker wins in the Maker-Breaker game on $\cH$ but, for every $e \in E$, Breaker wins the game on $\cH -e$.
\end{definition}
\begin{proposition}  \label{prop:critical-hgs-properties}
For every hypergraph $\cH=(V,E)$, the following statements hold.
\begin{itemize}
\item[$(i)$] Maker wins the game on $\cH$ if and only if $\cH$ contains a Maker-Breaker critical subhypergraph.
\item[$(ii)$] Let $V_0 \subseteq V$ be a set of isolated vertices in $\cH$. Then, $\cH$ is Maker-Breaker critical if and only if $\cH-V_0$ is Maker-Breaker critical.
\item[$(iii)$] If $\cH$ is a disconnected Maker-Breaker critical hypergraph, then all but one components of $\cH$ are isolated vertices.
 \end{itemize}  
\end{proposition}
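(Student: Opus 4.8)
The plan is to isolate two folklore facts about Maker-Breaker games and then read off all three parts from them, together with the monotonicity noted in the introduction (adding a winning set preserves a Maker win) and the determinacy of these games (exactly one player has a winning strategy). The first fact I would record is \textbf{Lemma A}: if $V_0\subseteq V$ is a set of isolated vertices, then Maker wins on $\cH$ if and only if Maker wins on $\cH-V_0$. The idea is that a vertex lying in no winning set is strategically a pass: claiming it neither completes a winning set for Maker nor blocks one for Breaker. To lift a winning strategy from the smaller board to the larger one I would use pairing---whenever the opponent claims a vertex of $V_0$, respond with another vertex of $V_0$, and otherwise follow the original strategy on $V\setminus V_0$---and when $V_0$ runs out of a partner I would invoke the principle that an extra Maker move, or dually a skipped Breaker move, never hurts Maker. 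By determinacy the same statement then holds verbatim with the roles of the players exchanged. Verifying that the paired responses stay available and that the parity of $|V_0|$ causes no trouble is the only genuinely delicate point in the whole proposition.

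Part $(ii)$ is then immediate from Lemma A and determinacy. Criticality of $\cH$ is the conjunction of ``Maker wins on $\cH$'' and ``Breaker wins on $\cH-e$ for every $e\in E$''. Deleting $V_0$ leaves the edge set untouched and keeps every vertex of $V_0$ isolated in each $\cH-e$, so Lemma A (for Maker) and its determinacy-dual (for Breaker) show that each conjunct is unchanged; hence $\cH$ is critical if and only if $\cH-V_0$ is. For part $(i)$ the forward direction needs no game theory beyond the definition: I would choose $E'\subseteq E$ minimal under inclusion with the property that Maker wins on $(V,E')$, which exists since $E$ is finite and Maker wins on $\cH$. By minimality Maker loses, hence by determinacy Breaker wins, on $(V,E'\setminus\{e\})$ for every $e\in E'$, so $(V,E')$ is a Maker-Breaker critical subhypergraph. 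Conversely, if $\cH'=(V',E')$ is a critical subhypergraph then Maker wins on $\cH'$; the vertices of $V\setminus V'$ are isolated in $(V,E')$, so Lemma A upgrades this to a Maker win on $(V,E')$, and the stated monotonicity (adding the winning sets in $E\setminus E'$) upgrades it further to a Maker win on $\cH$.

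Finally, part $(iii)$ rests on \textbf{Lemma B}: Maker wins on $\cH$ if and only if Maker wins on at least one component. Its backward direction is the Lemma A argument with the other components in the role of the irrelevant vertices, and its forward direction is the contrapositive, where, if Breaker wins on every component, Breaker answers each Maker move inside the very component it was played in and thereby blocks a vertex in every winning set. Granting Lemma B, let $C_1$ be a component on which Maker wins; one exists because $\cH$, being critical, is a Maker win. If some other component $C_j$ contained an edge $e$, then $C_1$ would survive untouched as a component of $\cH-e$, so Maker would still win on $\cH-e$ by Lemma B, contradicting criticality. Hence every component other than $C_1$ is edgeless, and an edgeless component is a single isolated vertex, which is exactly the assertion. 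I expect the parity bookkeeping inside Lemmas A and B to be the main, and indeed essentially the only, obstacle; the derivations of $(i)$--$(iii)$ from them are then short.
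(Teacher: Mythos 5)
Your proposal is correct and follows essentially the same route as the paper: monotonicity plus iterative/minimal edge removal for $(i)$, the irrelevance of isolated vertices for $(ii)$, and the component-wise Breaker response for $(iii)$. The only differences are cosmetic --- you phrase the edge-removal step via a minimal $E'$ rather than repeated deletion, and you spell out the pairing/extra-move details that the paper treats as folklore --- so no further comparison is needed.
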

\begin{proof}
  $(i)$ Recall that adding new winning sets to the hypergraph is never disadvantageous for Maker. If $\cH$ contains a Maker-Breaker critical subhypergraph $\cH'$, then Maker wins on $\cH'$ and therefore wins on $\cH$. For the other direction, assume that Maker wins on $\cH$. If $\cH$ is Maker-Breaker critical, then we are done. If this is not the case, there is a hyperedge $e \in E$ such that Maker wins on $\cH-e$. We repeat this operation while it is possible; i.e., we remove an edge such that Maker wins on the hypergraph obtained. As Maker does not win if $E'=\emptyset$, we will arrive at a subhypergraph $\cH'$ that is Maker-Breaker critical. 

  For $(ii)$, we note that the presence or absence of isolated vertices in $\cH$ does not affect the outcome of the Maker-Breaker game. 
  
  To prove $(iii)$, suppose that $\cH$ is disconnected and consists of components $\cH_1, \dots, \cH_\ell$. Breaker as second can win the game on $\cH$ if each component $\cH_i$, for $i \in [\ell]$, is a Breaker-win graph. Indeed, if Breaker replies to every move of Maker in $\cH_i$ by playing a vertex from the same component according to a winning strategy on $\cH_i$, Maker can never claim a winning set. (Breaker may choose an arbitrary vertex from $\cH$ if there is no unplayed vertex in $\cH_i$ after the move of Maker.) Consequently, if Maker can win the game on $\cH$, then there is at least one component, say $\cH_1$, such that Maker wins on $\cH_1$. Removing all winning sets outside $\cH_1$ leaves a hypergraph on which Maker wins. Thus, if $\cH$ is Maker-Breaker critical, there are no hyperedges outside $\cH_1$, and equivalently, the other components are all isolated vertices.  
\end{proof}
\qed

By Proposition~\ref{prop:critical-hgs-properties}~(ii), we may restrict our attention to Maker-Breaker critical hypergraphs without isolated vertices that we call \emph{atomic Maker-Breaker critical hypergraphs}. As a consequence of Proposition~\ref{prop:critical-hgs-properties}, we may also state the following properties.
\begin{corollary} \enskip
   \begin{itemize}
       \item[(i)] Maker wins the game on a hypergraph $\cH$ if and only if $\cH$ contains an atomic Maker-Breaker critical subhypergraph.
\item[$(ii)$] If $\cH$ is an atomic Maker-Breaker critical hypergraph, then $\cH$ is connected.
   \end{itemize} 
\end{corollary}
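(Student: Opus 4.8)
The final statement is the Corollary with two parts, both of which follow directly from Proposition~\ref{prop:critical-hgs-properties}. Let me sketch how I would prove each part.

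The plan is to derive both statements of the corollary as immediate consequences of Proposition~\ref{prop:critical-hgs-properties}, since the corollary is explicitly stated to follow from it. The key idea throughout is the correspondence between ``Maker-Breaker critical'' and ``atomic Maker-Breaker critical,'' which by part $(ii)$ of the proposition differ only by the presence of isolated vertices.

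For part $(i)$, I would start from Proposition~\ref{prop:critical-hgs-properties}~$(i)$, which already tells us that Maker wins on $\cH$ if and only if $\cH$ contains a Maker-Breaker critical subhypergraph $\cH'$. The only extra work is to upgrade ``Maker-Breaker critical'' to ``atomic.'' So first I would take the critical subhypergraph $\cH'$ guaranteed by the proposition. Then, letting $V_0$ be the set of isolated vertices of $\cH'$, I would apply Proposition~\ref{prop:critical-hgs-properties}~$(ii)$ to conclude that $\cH' - V_0$ is again Maker-Breaker critical, and it has no isolated vertices, hence it is atomic. Since $\cH' - V_0$ is a subhypergraph of $\cH'$, which is a subhypergraph of $\cH$, it is an atomic critical subhypergraph of $\cH$. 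The converse direction is immediate: an atomic Maker-Breaker critical subhypergraph is in particular a Maker-Breaker critical subhypergraph, so Maker wins by part $(i)$ of the proposition.

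For part $(ii)$, I would invoke Proposition~\ref{prop:critical-hgs-properties}~$(iii)$, which says that a disconnected Maker-Breaker critical hypergraph has all but one of its components equal to isolated vertices. Now if $\cH$ is atomic, it has no isolated vertices at all, so the only way to be consistent with $(iii)$ is for $\cH$ to have a single non-trivial component, i.e.\ to be connected. I would phrase this as a short contrapositive: were $\cH$ disconnected, part $(iii)$ would force at least one isolated-vertex component, contradicting atomicity.

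Neither part presents a genuine obstacle, as both are routine corollaries. The only point requiring a little care is the atomicity upgrade in part $(i)$: one must make sure that deleting the isolated vertices of the critical subhypergraph keeps it both critical (handled by part $(ii)$ of the proposition) and a subhypergraph of the original $\cH$ (immediate, since vertex deletion and edge restriction preserve the subhypergraph relation). I would write the whole corollary as a two- or three-sentence argument, citing the relevant parts of Proposition~\ref{prop:critical-hgs-properties} directly rather than repeating any of its reasoning.
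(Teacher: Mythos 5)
Your proposal is correct and follows exactly the derivation the paper intends: the paper states this corollary without proof as an immediate consequence of Proposition~\ref{prop:critical-hgs-properties}, and your argument (stripping isolated vertices via part~$(ii)$ to upgrade the critical subhypergraph from part~$(i)$ to an atomic one, and deducing connectivity from part~$(iii)$ since atomicity forbids isolated-vertex components) is precisely the intended reasoning. No issues.
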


In Section~\ref{sec:families-MB-critical-hgs}, we present several further statements and examples for Maker-Breaker critical hypergraphs.

\section{Maker-Breaker domination game critical graphs} \label{sec:MBD-critical-graphs}

It was first observed in~\cite{duchene-2020} (see also~\cite[Observation 1.3]{bujtas-2024}) that a Maker-Breaker domination game on a graph $G$ can be considered as a Maker-Breaker game on $\cN(G)$ where Staller plays as Maker and Dominator plays as Breaker. Equivalently, Staller's goal is to claim the entire closed neighborhood $N_G[v]$ for a vertex $v$. If she is successful in this goal, it will make it impossible for Dominator to dominate $v$, and Staller wins.  In a predominated graph $(G,D)$, the closed neighborhoods of the predominated vertices are no longer winning sets for Staller, and we obtain the following interrelation between the games.
\begin{observation} \label{obs:closed-neigh-hg-predomination}
    A Maker-Breaker domination game on the predominated graph $(G,D)$ corresponds to the Maker-Breaker game on the hypergraph $\cN(G,D)$, where Staller is Maker and Dominator is Breaker.
\end{observation}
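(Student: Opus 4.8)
The plan is to verify that the two games share the same board and the same move structure, and then to match their winning conditions; the claim then follows because a Maker-Breaker game admits no draw. Both games are played on $V(G)$: in the domination game the players alternately claim unclaimed vertices of $(G,D)$, while in the hypergraph game they alternately claim unclaimed vertices of $\cN(G,D)$, whose vertex set is also $V(G)$. Since Staller is identified with Maker and both are assumed to move first, the two games have identical legal-move structure, so any play of one is naturally a play of the other. The predominated vertices of $D$ remain on the board in both settings; in $\cN(G,D)$ they simply generate no hyperedge, which mirrors the fact that the closed neighborhoods of vertices of $D$ are not winning sets for Staller, even though they may lie inside a hyperedge $N_G[v]$ for some $v\notin D$.

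First I would match Staller's winning condition with Maker's. By the definition of the edge set, $E(\cN(G,D)) = \{N_G[v] : v\in V(G)\setminus D\}$, so Maker claims an entire hyperedge exactly when she claims all of $N_G[v]$ for some $v\notin D$. This is precisely Staller's winning condition in the domination game, so Staller wins the domination game if and only if Maker wins the game on $\cN(G,D)$.

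Next I would match Dominator's winning condition with Breaker's. Breaker wins the hypergraph game when he has claimed at least one vertex of every hyperedge, that is, when for every $v\in V(G)\setminus D$ the player Dominator owns some vertex of $N_G[v]$. This says exactly that the set of vertices claimed by Dominator dominates every vertex of $V(G)\setminus D$, which is Dominator's winning condition. The restriction of the hyperedge family to non-predominated vertices is essential here: Dominator need only dominate $V(G)\setminus D$, and correspondingly Breaker need only hit the hyperedges $N_G[v]$ with $v\notin D$.

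Finally, since a Maker-Breaker game always ends with exactly one of Maker and Breaker winning, these two matchings are consistent, and the correspondence is established. The one point I expect to carry the actual content is verifying that the two outcomes are genuinely complementary on the domination side as well, i.e.\ that ``Dominator dominates $V(G)\setminus D$'' is exactly the negation of ``Staller owns a full closed neighborhood $N_G[v]$ with $v\notin D$''. At the end of play each vertex is owned by exactly one player, so for every $v\notin D$ either Dominator owns a vertex of $N_G[v]$, in which case $v$ is dominated, or Staller owns all of $N_G[v]$; these alternatives are mutually exclusive and exhaustive, which yields the required complementarity and completes the correspondence.
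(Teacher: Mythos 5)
Your proposal is correct and matches the paper's (implicit) justification: the paper treats this as an observation, supported only by the surrounding remark that the winning sets for Staller are exactly the closed neighborhoods $N_G[v]$ with $v\notin D$, which is precisely the identification of boards and winning conditions you spell out. Your additional complementarity check is a fine elaboration but does not change the argument.
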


\begin{proposition} \label{prop:MBDcritical}
The following statements hold for every predominated graph $(G,D)$.
\label{propositions}
\begin{itemize}
    \item[$(i)$] If $e$ is an edge between two vertices of $D$, then Staller can win on $(G,D)$ if and only if she wins on $(G-e,D)$. Further, $(G,D)$ is MBD critical if and only if $(G-e,D)$ is MBD critical.
    \item[$(ii)$] If a predominated vertex $u \in D$ is isolated in $G$, then Staller can win on $(G,D)$ if and only if she wins on $(G-u,D\setminus \{u\})$. Further, $(G,D)$ is MBD critical if and only if $(G-u,D \setminus \{u\})$ is MBD critical.  
     \item[$(iii)$] If $G$ is disconnected and $(G,D)$ is MBD critical, then all vertices in $V(G) \setminus D$ belong to the same component of $G$.
\end{itemize}    
\end{proposition}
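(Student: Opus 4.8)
The plan is to translate everything into the hypergraph language afforded by Observation~\ref{obs:closed-neigh-hg-predomination}, and then invoke the structural results for Maker-Breaker critical hypergraphs from Proposition~\ref{prop:critical-hgs-properties}. Recall that Staller wins on $(G,D)$ exactly when Maker wins on $\cN(G,D)$, and that $(G,D)$ is MBD critical exactly when $\cN(G,D)$ is Maker-Breaker critical (removing a vertex $v\in V(G)\setminus D$ from the predomination corresponds precisely to deleting the hyperedge $N_G[v]$ from $\cN(G,D)$). So each claim reduces to an assertion about how $\cN(G,D)$ changes under the stated graph operation.

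For part $(i)$, the key observation is that deleting an edge $e=uw$ with $u,w\in D$ does not change the hypergraph at all: the winning sets are the closed neighborhoods $N_G[v]$ for $v\notin D$, and although deleting $e$ changes $N_G[u]$ and $N_G[w]$, these are not hyperedges of $\cN(G,D)$ since $u,w\in D$. It remains to check that no hyperedge $N_G[v]$ with $v\notin D$ is affected; I would argue that $v\notin\{u,w\}$, so the adjacency of $v$ to other vertices is untouched by removing $e$, whence $\cN(G,D)=\cN(G-e,D)$. Both equivalences then follow immediately, since the games and the criticality status depend only on this hypergraph. For part $(ii)$, deleting an isolated predominated vertex $u$ removes $u$ from the board but creates no change in any winning set (an isolated vertex lies in no closed neighborhood of any other vertex, and $N_G[u]$ is not a hyperedge since $u\in D$); thus $u$ is an isolated vertex of $\cN(G,D)$, and I would apply Proposition~\ref{prop:critical-hgs-properties}~$(ii)$ to conclude that criticality is preserved, while the outcome equivalence follows because isolated vertices of a hypergraph do not affect the Maker-Breaker outcome.

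For part $(iii)$, the plan is to transfer the component structure of $G$ into that of $\cN(G,D)$ and then apply Proposition~\ref{prop:critical-hgs-properties}~$(iii)$. If $G$ has components $G_1,\dots,G_k$, then every hyperedge $N_G[v]$ lies entirely within the component containing $v$, so the hyperedges of $\cN(G,D)$ split along these components; hence $\cN(G,D)$ is disconnected (apart from possibly isolated vertices) whenever the non-predominated vertices are spread across more than one component. Since $(G,D)$ MBD critical means $\cN(G,D)$ is Maker-Breaker critical, Proposition~\ref{prop:critical-hgs-properties}~$(iii)$ forces all but one component of $\cN(G,D)$ to be isolated vertices, which means all hyperedges live in a single component of $G$; equivalently, all vertices $v\in V(G)\setminus D$ lie in one component.

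The main obstacle I anticipate is bookkeeping the distinction between components of $G$ and components of the hypergraph $\cN(G,D)$: a predomination vertex $u\in D$ sitting in its own component of $G$ contributes no hyperedge and appears as an isolated vertex of the hypergraph, so I must be careful to phrase part $(iii)$ in terms of the non-predominated vertices $V(G)\setminus D$ rather than naively in terms of $G$'s components. Once this correspondence is set up cleanly, each part is a short deduction from the earlier proposition.
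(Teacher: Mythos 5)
Your proposal is correct and follows essentially the same route as the paper: translate the game on $(G,D)$ to the Maker-Breaker game on $\cN(G,D)$ via Observation~\ref{obs:closed-neigh-hg-predomination}, observe that the stated graph operations leave the relevant hyperedge sets unchanged (for $(G,D)$ and for every $(G,D\cup\{v\})$), and invoke parts $(ii)$ and $(iii)$ of Proposition~\ref{prop:critical-hgs-properties} for the last two claims. The only caveat, which the paper's own proof shares, is that the blanket identification ``$(G,D)$ is MBD critical iff $\cN(G,D)$ is Maker-Breaker critical'' is delicate when distinct non-predominated vertices have equal closed neighborhoods; your per-part arguments do not actually depend on it, since in $(i)$ and $(ii)$ the entire family of hypergraphs $\cN(G,D\cup\{v\})$ is preserved, and in $(iii)$ only the implication from MBD criticality to hypergraph criticality is needed.
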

\begin{proof}
    By Observation~\ref{obs:closed-neigh-hg-predomination}, the winning sets for Staller in the MBD game on $(G,D)$ are the closed neighborhoods of the vertices from $V(G) \setminus D$. Thus, removing an edge $e=uu'$ with $u,u' \in D$ does not modify the winning sets of the game. It implies that the game's outcome is the same on $(G,D)$ and $(G-e, D)$. Further, for every vertex $v \in V(G) \setminus D$, the same equivalence holds for $(G, D \cup \{v\})$ and $(G-e, D \cup \{v\})$. It finishes the proof for part (i).

    To prove (ii), we remark that a predominated and isolated vertex $u$ does not belong to any winning sets and therefore, it is an isolated vertex in $\cN(G,D)$. Then, Proposition~\ref{prop:critical-hgs-properties}~(ii) directly implies the statement. 

    To show~(iii), we remark that those components of $G$ that are entirely predominated correspond to a set of isolated vertices in the hypergraph $\cN(G,D)$. Then, by Proposition~\ref{prop:critical-hgs-properties}~(iii), part (iii) follows.
\qed    
 \end{proof}

\begin{definition} \label{def:atomic-MBD}
  A predominated graph $(G,D)$ is an \emph{atomic MBD critical} graph if $(G,D)$ is MBD critical, $D$ is an independent set, and no vertex from $D$ is an isolated vertex in $G$. 
\end{definition}

Clearly, $(P_1, \emptyset)$ is an atomic MBD critical graph. By Proposition~\ref{propositions}~(iii) and Definition~\ref{def:atomic-MBD}, it is the only atomic MBD critical graph that contains an isolated vertex. If $(G,D)$ is MBD critical, we may obtain an atomic MBD critical graph $(G', D')$ by removing all the edges inside $D$ and then deleting all the isolated vertices. If $G$ is a disconnected graph, then by Proposition~\ref{propositions}~(iii), the obtained atomic MBD critical graph $(G', D')$ will be a connected subgraph of one component of $G$.

\section{MBD critical trees}
\label{sec:MBD-critical-trees}

\subsection{Preliminaries}

In~\cite{bujtas-2023}, the authors considered the minimum number $ \gsmb'(G) $ of moves Staller needs to win the game on a graph $G$ (provided that both players play optimally). The characterization of trees $T$ having $\gsmb'(T)=k$ involved a definition of a family  $\cS_k$ of graphs where every graph $S \in \cS_k$ is given together with a fixed subset of vertices denoted by $X(S)$. Here, without introducing $\cS_k$ for each $k$, we consider the union $\cS$ of these families. According to \cite[Proposition 3.4]{bujtas-2023}, this union may also be defined as follows.

\begin{definition} \label{def:cS}
   For a tree $T$, let $S(T)$ denote the tree obtained from $T$ by subdividing each edge exactly once. We define
  $$ \cS=\{ S(T): \mbox{$T$ is a tree} \} \quad \mbox{and} \quad X(S(T))=V(T).
  $$     
\end{definition}

We remark that $S(P_1)=P_1$ and $X(P_1) = V(P_1)$.

\begin{definition} \label{def:substructure}
  We say that $F \in \cS$ is a \emph{substructure} in the graph $G$, if $F$ is a subgraph of $G$ and $\deg_G(v) = \deg_F(v)$ holds for each $v \in X(F)$.
\end{definition}

Motivated by the requirement in Definition~\ref{def:substructure}, the vertices in $X(F)$ are called \emph{fixed-degree vertices}. Given a substructure $F$ in a graph $G$, we will say that a vertex $v \in V(F)$ is \emph{black} if $v$ is a fixed-degree vertex (i.e., $v \in X(F)$), and otherwise (i.e., if $v \in V(F) \setminus X(F)$) vertex $v$ is \emph{white}. In Fig.~\ref{fig:T-S(T)-subsctructure} these definitions are illustrated. 

\begin{figure}[ht!]
\begin{center}
\begin{tikzpicture}[scale=0.7,style=thick,x=1cm,y=1cm]
\def\vr{5pt}
\begin{scope}[xshift=-1cm, yshift=0cm] 
\coordinate(v1) at (0.0,2.0);
\coordinate(v2) at (-2.0,0.0);
\coordinate(v3) at (0.0,0.0);
\coordinate(v4) at (2.0,0.0);
\coordinate(v5) at (0.0,-2.0);
\coordinate(v6) at (0.0,-4.0);
\draw (v2) -- (v3) -- (v4);  
\draw (v1) -- (v3) -- (v5) -- (v6);  

\foreach \i in {1,2,3,4,5,6} 
{
\draw(v\i)[fill=white] circle(\vr);
}
\node at (-2.0,2.0) {$T$};

\end{scope}
\begin{scope}[xshift=6cm, yshift=0cm] 
\coordinate(v1) at (0.0,2.0);
\coordinate(v2) at (-2.0,0.0);
\coordinate(v3) at (0.0,0.0);
\coordinate(v4) at (2.0,0.0);
\coordinate(v5) at (0.0,-2.0);
\coordinate(v6) at (0.0,-4.0);
\coordinate(v7) at (0.0,1.0);
\coordinate(v8) at (-1.0,0.0);
\coordinate(v9) at (1.0,0.0);
\coordinate(v10) at (0.0,-3.0);
\coordinate(v11) at (0.0,-1.0);
\draw (v2) -- (v3) -- (v4);  
\draw (v1) -- (v3) -- (v5) -- (v6);  

\foreach \i in {1,2,3,4,5,6} 
{
\draw(v\i)[fill=black] circle(\vr);
}
\foreach \i in {7,8,9,10,11} 
{
\draw(v\i)[fill=white] circle(\vr);
}

\node at (-2.0,2.0) {$S(T)$};
\end{scope}
\begin{scope}[xshift=13cm, yshift=0cm] 
\coordinate(v1) at (0.0,2.0);
\coordinate(v2) at (-2.0,0.0);
\coordinate(v3) at (0.0,0.0);
\coordinate(v4) at (2.0,0.0);
\coordinate(v5) at (0.0,-2.0);
\coordinate(v6) at (0.0,-4.0);
\coordinate(v7) at (0.0,1.0);
\coordinate(v8) at (-1.0,0.0);
\coordinate(v9) at (1.0,0.0);
\coordinate(v10) at (0.0,-3.0);
\coordinate(v11) at (0.0,-1.0);
\coordinate(v12) at (-1.0,-1.0);
\coordinate(v13) at (-2.0,-1.0);
\coordinate(v14) at (-3.0,-1.0);
\coordinate(v15) at (-2.0,-2.0);
\coordinate(v16) at (-2.5,-3.0);
\coordinate(v17) at (-1.5,-3.0);
\coordinate(v18) at (1.0,-1.0);
\coordinate(v19) at (2.0,-1.0);
\coordinate(v20) at (1.0,-2.0);

\draw (v2) -- (v3) -- (v4);  
\draw (v1) -- (v3) -- (v5) -- (v6); 
\draw (v12) -- (v8);
\draw (v12) -- (v13) -- (v14);  
\draw (v13) -- (v15); 
\draw (v15) -- (v16); 
\draw (v15) -- (v17); 
\draw (v11) -- (v18) -- (v19); 
\draw (v18) -- (v20); 
\draw[rounded corners,dashed](-2.5,-0.5)--(-2.5,0.5)--(-0.5,0.5)--(-0.5,2.5)--(0.5,2.5)--(0.5,0.5)--(2.5,0.5)--(2.5,-0.5)--(0.5,-0.5)--(0.5,-4.5)--(-0.5,-4.5)--(-0.5,-0.5)--cycle;

\foreach \i in {1,2,3,4,5,6} 
{
\draw(v\i)[fill=black] circle(\vr);
}
\foreach \i in {7,8,9,10,11} 
{
\draw(v\i)[fill=white] circle(\vr);
}
\foreach \i in {12,13,14,15,16,17,18,19,20} 
{
\draw(v\i)[fill=white] circle(\vr);
}

\node at (-2.0,2.0) {$G$};
\end{scope}
\end{tikzpicture}
\caption{A tree $T$, the tree $S(T)$, and $G$ with a substructure $S(T)$.
}
\label{fig:T-S(T)-subsctructure}
\end{center}
\end{figure}
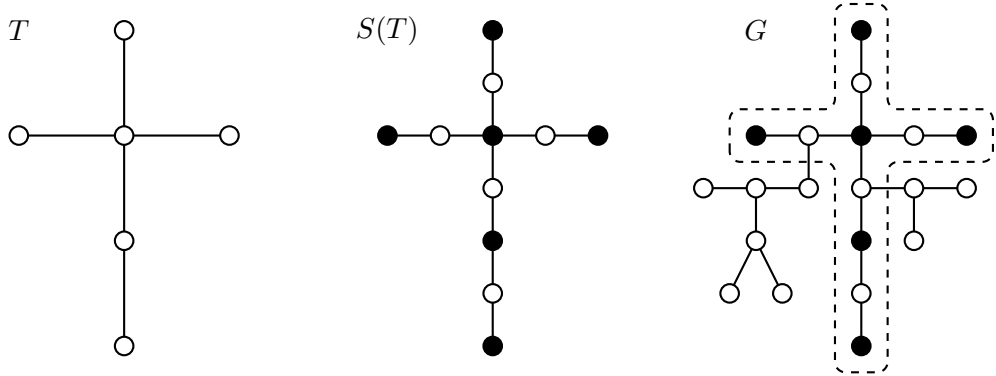

With this terminology, we may state the following result which can be quickly derived from~\cite[Theorem 3.5]{bujtas-2023}.

\begin{theorem}[\cite{bujtas-2023}] \label{thm:cS}
   Staller wins the MBD game on a tree $T$, if and only if\/ $T$ contains a substructure $F \in \cS$.  
\end{theorem}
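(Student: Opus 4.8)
The plan is to deduce this qualitative statement from the quantitative characterization of \cite[Theorem~3.5]{bujtas-2023} by simply forgetting the number of moves. Recall that $\gsmb'(T)$ is the least number of moves in which Staller can force a win; with the convention that $\gsmb'(T)=\infty$ exactly when Dominator wins, the assertion ``Staller wins the MBD game on $T$'' is equivalent to ``$\gsmb'(T)<\infty$''. By the way the families were introduced, $\cS=\bigcup_{k\ge 1}\cS_k$, and the subdivision description of $\cS$ recorded in Definition~\ref{def:cS} is precisely the reformulation furnished by \cite[Proposition~3.4]{bujtas-2023}. Hence it suffices to prove that $\gsmb'(T)$ is finite if and only if $T$ contains a substructure lying in some $\cS_k$.

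For the direction from winning to a substructure, I would assume Staller wins, so $\gsmb'(T)=k$ for some finite $k$. The ``only if'' part of \cite[Theorem~3.5]{bujtas-2023} applied to this $k$ produces a substructure $F\in\cS_k\subseteq\cS$ inside $T$, as required. For the converse, suppose $T$ contains some substructure from $\cS$, and let $k_{\min}$ be the smallest index for which $T$ contains a substructure in $\cS_{k_{\min}}$; such a minimum exists since the indices are positive integers. Then $T$ contains a substructure in $\cS_{k_{\min}}$ and, by minimality, none in $\cS_j$ for $j<k_{\min}$, which is exactly the configuration characterized by \cite[Theorem~3.5]{bujtas-2023} as $\gsmb'(T)=k_{\min}<\infty$; consequently Staller wins. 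It is worth recalling the mechanism that makes a substructure useful: for every black vertex $v\in X(F)$ the fixed-degree condition $\deg_G(v)=\deg_F(v)$ gives $N_G[v]=N_F[v]$, so the closed neighborhoods on which Staller's strategy in $F$ is built are never enlarged inside $T$, and her strategy transfers verbatim from $F$ to $T$.

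The one point that needs care, and which I would pin down before writing the final proof, is the exact logical shape of \cite[Theorem~3.5]{bujtas-2023}. If it is stated as a plain equivalence ``$\gsmb'(T)=k$ if and only if $T$ contains a substructure in $\cS_k$'', then one must not read its converse as ``a substructure in $\cS_k$ forces $\gsmb'(T)=k$'', since a tree may in principle contain substructures of several sizes; the minimal-index passage above is designed precisely to sidestep this, so that only the one-sided consequence ``the minimal substructure determines a finite value of $\gsmb'$'' is invoked. Since the present statement asks only whether $\gsmb'(T)$ is finite, the numerical index is discarded at the very end by collapsing $\bigcup_{k\ge 1}\cS_k$ into $\cS$, and the remaining bookkeeping — that this union is indeed the subdivision family of Definition~\ref{def:cS} — is exactly what \cite[Proposition~3.4]{bujtas-2023} supplies.
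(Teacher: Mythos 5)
The paper gives no proof of this statement: it is imported from \cite{bujtas-2023} with only the remark that it ``can be quickly derived from'' Theorem~3.5 there, and your proposal carries out precisely that derivation (collapse $\bigcup_k\cS_k$ into $\cS$ via \cite[Proposition~3.4]{bujtas-2023} and discard the move count, using the minimal index to handle the biconditional safely). Your argument is correct and is essentially the same route the paper intends.
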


Furthermore, it turns out that we can utilize~\cite[Proposition~3.7]{bujtas-2023} to establish a sufficient condition for a hypergraph $\cH$ on which Maker can win the Maker-Breaker game. Using our notation introduced for closed neighborhood hypergraphs of predominated graphs, for every $F \in \cS$, the hypergraph $\cN(F, V(F) \setminus X(F))$ consists of hyperedges which are the closed neighborhoods of the vertices in $X(F)$. Then, we may define 
$$ \cF= \{\cN(F, V(F) \setminus X(F)): F \in \cS    \}
$$
and restate the result from~\cite{bujtas-2023}.

\begin{proposition}[\cite{bujtas-2023}] \label{prop:cF}
   If a hypergraph $\cH$ contains a subhypergraph from $\cF$, then Maker can win the Maker-Breaker game on $\cH$.
\end{proposition}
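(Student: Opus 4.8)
The plan is to reduce the statement to a single clean game and then win it by an explicit forcing strategy. First I would invoke monotonicity: as recalled in the proof of Proposition~\ref{prop:critical-hgs-properties}, adding winning sets never hurts Maker, so if $\cH$ contains some $\cH'\in\cF$ as a subhypergraph, it suffices to prove that Maker wins on $\cH'$ itself (Maker plays the $\cH'$-strategy and ignores the extra vertices and hyperedges; any Breaker move outside $V(\cH')$ is a free tempo for Maker). Thus the whole task collapses to showing that Maker, moving first, wins on each $\cN(S(T),V(S(T))\setminus V(T))$. Here I would reformulate that hypergraph concretely: its board carries one ``black'' token $v$ for each vertex of $T$ and one ``white'' token $w_e$ for each edge $e$ of $T$, and the hyperedge $H_v=N_{S(T)}[v]$ equals $\{v\}$ together with the white tokens on the edges of $T$ incident to $v$. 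Crucially, each black token lies in exactly one hyperedge (it is private to $H_v$) while each white token $w_{xy}$ lies in exactly the two hyperedges $H_x,H_y$; hence the incidence pattern of the hyperedges is $T$ itself.

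Next I would give Maker a forcing ``threat-chain'' strategy on this board. Pick a leaf $z$ of $T$, let $v$ be its neighbor, and root $T$ at $v$. Maker claims the white tokens of all edges \emph{except} $vz$ in a post-order (children before parents), and reserves $w_{vz}$ for the last white move. The point is that when Maker claims $w_{u,p(u)}$ for a vertex $u\notin\{v,z\}$, all of $u$'s child-edges have already been claimed, so this move completes the white part of $H_u$ and leaves $H_u$ missing only its private black $u$; this is a single threat, and since Breaker has no offence, Breaker is forced to answer by claiming $u$. Consequently every Maker move short of the last forces Breaker's reply, so Breaker never earns a free move to steal a white token Maker still needs. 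The final move, claiming $w_{vz}$, simultaneously completes the white part of $H_v$ (all other edges at $v$ are already claimed) and of $H_z$ ($z$ being a leaf), producing a double threat on the still-unclaimed blacks $v$ and $z$ that Breaker cannot parry.

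The verification I would then carry out is the turn-by-turn bookkeeping. Breaker's forced replies are precisely the blacks of the $|V(T)|-2$ processed vertices; these are distinct, disjoint from all white tokens and from $\{v,z\}$, so they never collide with Maker's plan. I must check that each threatened black is still unclaimed at the moment of its threat (it is, since Breaker only ever claims previously threatened blacks), and that no premature \emph{second} simultaneous threat arises — in particular that $H_v$ stays two tokens short ($v$ and $w_{vz}$) until the very last move, so Breaker is genuinely forced at every earlier step. The parity also needs a line: Maker moves first, plays the $|V(T)|-1$ white tokens (the first $|V(T)|-2$ of them forcing), and the double threat therefore falls on a Maker move, after which Maker completes one of $H_v,H_z$ on the following turn. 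The degenerate cases fit the same template: for $|V(T)|=1$ Maker claims the single black immediately, and for $|V(T)|=2$ Maker's first move on the unique white token $w_{vz}$ is already a double threat.

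The hard part will be exactly this ordering-and-forcing argument rather than any single computation. The delicate point is ensuring the chain never stalls, i.e.\ that \emph{every} Maker move before the last closes off some black's white-neighborhood and hence forces Breaker — this is what denies Breaker the tempo needed to pre-empt a white token. High-degree vertices are where this could fail, since completing such a vertex's white-neighborhood spans many moves; the post-order resolves this by guaranteeing that each of those intermediate moves instead completes a deeper, already-finished vertex first, so a forcing threat is always available. Rooting at a leaf's neighbor $v$ and saving $w_{vz}$ for last is precisely the device that converts the terminal step into a genuine double threat; choosing any other terminal edge (e.g.\ ending at the root as in a naive post-order) would leave only a single final threat, which Breaker could block, so the choice of endpoint is essential and must be argued carefully.
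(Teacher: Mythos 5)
Your argument is correct and complete. Note first that the paper itself offers no proof of Proposition~\ref{prop:cF}: it is imported verbatim from~\cite{bujtas-2023} (Proposition~3.7 there), where the natural argument runs by induction on the recursive construction of $\cS$ via the operation $(S_1,z_1)+(S_2,z_2)$ recorded in Section~6 of this paper. Your route is genuinely different and entirely self-contained: after the standard monotonicity reduction to $\cN(S(T),V(S(T))\setminus V(T))$, you observe that each black token is private to its hyperedge while each white token links exactly two hyperedges according to $T$, and you then give one explicit global strategy --- a post-order threat chain rooted at the neighbour of a leaf, with the edge to that leaf reserved for a terminal double threat. All the delicate points are in fact handled: the first post-order vertex is a leaf of $T$, so even Maker's opening move is already a threat; every intermediate move completes exactly one white neighbourhood (the parent edge of the current vertex is claimed strictly later, and $H_v$ stays two short until the end, so no premature double threat or stall occurs); Breaker's forced replies are exactly the blacks of processed vertices, hence never collide with the white tokens Maker still needs nor with $v$ and $z$; and the degenerate cases $|V(T)|\in\{1,2\}$ are checked. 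Compared with the inductive proof in the source, your construction buys an explicit move count ($|V(T)|$ Maker moves, matching the $\gamma_{\rm SMB}'$ values of~\cite{bujtas-2023}) at the cost of the bookkeeping you describe; the inductive version is shorter but less informative about the actual play.
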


\subsection{Characterization of MBD critical trees}

\begin{proposition}   \label{prop:coloring-substructures}
    Suppose that $T$ is a tree, $F_1 \in \cS$ and $F_2 \in \cS$ are substructures in $T$, and $v$ is a common vertex of $F_1$ and $F_2$. Then, $v \in X(F_1)$ if and only if $v \in X(F_2)$. 
\end{proposition}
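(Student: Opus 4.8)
The plan is to argue by contradiction, exploiting the rigid local structure of the members of $\cS$. I would first record two structural facts. Since every $F\in\cS$ equals $S(T')$ for some tree $T'$, the vertex set $V(F)$ splits into the black vertices $X(F)=V(T')$ and the white subdivision vertices $V(F)\setminus X(F)$; moreover every edge of $F$ joins a black vertex to a white one, and every white vertex has degree exactly $2$ in $F$. The second, and crucial, observation concerns black vertices of a substructure: if $w\in X(F)$ then the substructure condition gives $\deg_F(w)=\deg_T(w)$, so \emph{every} edge of $T$ incident with $w$ already lies in $F$. Consequently all $T$-neighbors of a black vertex belong to $F$ and are white in $F$. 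These two facts are what drive the whole argument.

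Now suppose, for a contradiction, that the common vertex $v$ is black in $F_1$ but white in $F_2$. Call a vertex that is common to $F_1$ and $F_2$ \emph{inconsistent} if it is black in one of the two substructures and white in the other; thus $v$ is inconsistent. The core step is to show that every inconsistent vertex has at least two inconsistent $T$-neighbors, via two symmetric cases. If $u$ is black in $F_1$ and white in $F_2$, then being white in $F_2$ gives $u$ exactly two $F_2$-neighbors $p,q$, which are black in $F_2$; and being black in $F_1$ forces, by the key observation, that $p$ and $q$ lie in $F_1$ and are white there. Hence $p,q$ are common to both substructures and inconsistent, while $up,uq\in E(F_2)\subseteq E(T)$ are two edges of $T$ joining $u$ to inconsistent vertices. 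The case in which $u$ is black in $F_2$ and white in $F_1$ is handled identically with the roles of $F_1$ and $F_2$ interchanged, now using the two $F_1$-neighbors of $u$.

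Finally I would consider the subgraph of $T$ induced by the set of all inconsistent vertices; it is nonempty because $v$ lies in it. By the propagation step, every vertex of this subgraph has degree at least $2$ in it, so it has minimum degree at least $2$ and therefore contains a cycle. This contradicts $T$ being a tree, so no inconsistent vertex exists, which is exactly the claim $v\in X(F_1)\iff v\in X(F_2)$. The main obstacle is the bookkeeping in the propagation step: one must verify that the two produced neighbors genuinely lie in \emph{both} substructures (so that the notion of inconsistency even applies to them) and that they receive opposite colors in the two substructures. Once the local observation about black vertices is established, this verification is routine, and the finiteness-and-acyclicity contradiction is then immediate.
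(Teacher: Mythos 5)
Your argument is correct, but it takes a genuinely different route from the paper. The paper uses the fact that, $T$ being a tree and $F_1,F_2$ being subtrees, the common vertices induce a \emph{connected} subtree $F_{1,2}$; it then picks a leaf (or the unique vertex) $u$ of $F_{1,2}$, shows by a short case analysis that $u$ receives the same color in $F_1$ and $F_2$ (either $u$ is a leaf of one substructure, hence a leaf of $T$ and of the other, hence black in both; or $u$ has neighbors escaping into each substructure separately, hence is white in both), and finally propagates agreement across $F_{1,2}$ using the bipartitions of $F_1$ and $F_2$. You instead run a contradiction via local propagation: an inconsistent vertex must have two inconsistent $T$-neighbors (the two $F_2$-neighbors of a white-in-$F_2$ vertex are forced into $F_1$ by the fixed-degree condition), so the inconsistent set would induce a subgraph of minimum degree at least $2$, hence a cycle in $T$. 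Both proofs rest on the same two pillars (bipartiteness of members of $\cS$ with white vertices of degree $2$, and the fixed-degree condition forcing all $T$-neighbors of a black vertex into the substructure), but yours trades the paper's anchor-vertex case analysis and the subtree-intersection connectivity for a direct appeal to acyclicity; this makes your version slightly more self-contained, while the paper's version additionally exhibits the connected intersection and the explicit matching of the two bipartitions, which is the picture it reuses informally when defining the global black/white/gray coloring. One small point worth making explicit in a final write-up: in the contradiction setup, $v$ white in $F_2$ forces $\deg_T(v)\ge 2$, so neither $F_1$ nor $F_2$ is $P_1$ and the degenerate member of $\cS$ never interferes with your structural facts.
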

\begin{proof}
The statement clearly holds if $F_1$ is an isolated vertex $P_1$ as, in this case, $T$ and $F_2$ are also isolated vertices. From now on we may suppose that both $F_1$ and $F_2$ are of order at least $3$.
    By Definitions~\ref{def:cS} and~\ref{def:substructure}, every nontrivial substructure $F \in \cS$ in $T$ contains at least two leaves from $T$, and all the leaves in $F$ belong to $X(F)$. The definitions also imply that the graph $F$ is bipartite, and that the bipartition of its vertices is given by $X(F)$ and $V(F) \setminus X(F)$, that is, the set of black and white vertices in $F$.

    Consider now the two substructures $F_1$ and $F_2$ in $T$ that share some vertices. Since $T$ is a tree and $F_1$ and $F_2$ are connected, the common vertices of $F_1$ and $F_2$ induce a connected subgraph $F_{1,2}$ of $T$. If $F_{1,2}$ is a tree of order at least two, we can choose a leaf $u$ in it. If $F_{1,2}$ consists of only one vertex, choose this vertex as $u$.
    
    If $u$ is a leaf in $F_1$, it is a fixed-degree vertex in the substructure, and therefore, it is a leaf in $T$ and $F_2$ as well. It follows that $u \in X(F_1) \cap X(F_2)$. Then, the bipartition of the substructures gives that for every $v \in V(F_1) \cap V(F_2)$, vertex $v$ belongs to $X(F_1)$ if and only if $v \in X(F_2)$. The same is true if we start with the assumption that $u$ is a leaf in $F_2$.

    If $u$ is a leaf or the only vertex in $F_{1,2}$ but it is neither a leaf in $F_1$ nor in $F_2$, then $u$ has a neighbor from both $V(F_1) \setminus V(F_2)$ and $V(F_2) \setminus V(F_1)$. Thus, $u$ is neither a fixed-degree vertex in $F_1$ nor in $F_2$. We then have  $u \in V(F_i) \setminus X(F_i)$, for $i \in \{1,2\}$, and the bipartitions of $F_1$ and $F_2$ give that, for every $v \in V(F_{1,2})$, the color of $v$ in $F_1$ is the same as its color in $F_2$.  \qed    
\end{proof}

We now define the following vertex coloring for a tree $T$. A vertex $v$ is \emph{black} in $T$ if there is a substructure $F \in \cS$ in $T$ such that $v$ is black in that substructure, i.e.~$v \in X(F)$; a vertex $v$ is \emph{white} if there exists a substructure $F$ in $T$ such that $v$ is white in that substructure, i.e.~$v \in V(F) \setminus X(F)$. The remaining vertices of $T$ belong to no substructure, and we say that they are \emph{gray}. 

By Proposition~\ref{prop:coloring-substructures} the vertex colorings stemming from different substructures are compatible -- no vertex from $T$ is colored with both white and black. Therefore, the sets of black, white, and gray vertices give a partition of $V(T)$. An example of a tree $T$ with its substructures and its vertex coloring is presented in Fig.~\ref{fig:substructures}.

\begin{figure}[ht!]
\begin{center}
\begin{tikzpicture}[scale=0.7,style=thick,x=1cm,y=1cm]
\def\vr{5pt}
\coordinate(v1) at (2.0,0.0);
\coordinate(v2) at (4.0,0.0);
\coordinate(v4) at (2.0,2.0);
\coordinate(v5) at (4.0,2.0);
\coordinate(v6) at (6.0,2.0);
\coordinate(v7) at (2.0,4.0);
\coordinate(v8) at (4.0,4.0);
\coordinate(v9) at (6.0,4.0);
\coordinate(v10) at (4.0,6.0);
\coordinate(v11) at (0.5,7.5);
\coordinate(v12) at (2.5,7.5);
\coordinate(v13) at (5.5,7.5);
\coordinate(v14) at (7.5,7.5);
\draw (v2) -- (v5) -- (v4);  
\draw (v6) -- (v5) -- (v8) -- (v7);  
\draw (v9) -- (v8) -- (v10) -- (v12) -- (v11);  
\draw (v10) -- (v13) -- (v14);  
\draw (v10) -- (v12) -- (v11);  
\draw (v1) -- (v4);
\draw[rounded corners, densely dotted](-0.2,3.2)--(-0.2,8.4)--(8.0,8.4)--(8.0,7.2)--(5.5,7.2)--(4.5,6.2)--(4.5,3.2)--cycle;
\draw[rounded corners,dashed](8.4,3.0)--(8.4,8.0)--(0.2,8.0)--(0.2,7.0)--(2.7,7.0)--(3.7,6.0)--(3.7,3.0)--cycle;
\draw[rounded corners](1.5,3.4)--(1.5,4.6)--(7.5,4.6)--(7.5,3.4)--cycle;
\draw[rounded corners](3.5,-0.5)--(3.5,2.5)--(7.5,2.5)--(7.5,-0.5)--cycle;
\foreach \i in {2,6,7,9,10,11,14} 
{
\draw(v\i)[fill=black] circle(\vr);
}
\foreach \i in {5,8,12,13} 
{
\draw(v\i)[fill=white] circle(\vr);
}
\foreach \i in {1,4} 
{
\draw(v\i)[fill=lightgray] circle(\vr);
}
%
\end{tikzpicture}
\caption{The partition of the vertices of the tree $T$ into black, white and gray vertices, along with the four substructures of $T$. The colorings on substructures are compatible on their intersections, by Proposition~\ref{prop:coloring-substructures}.
}
\label{fig:substructures}
\end{center}
\end{figure}
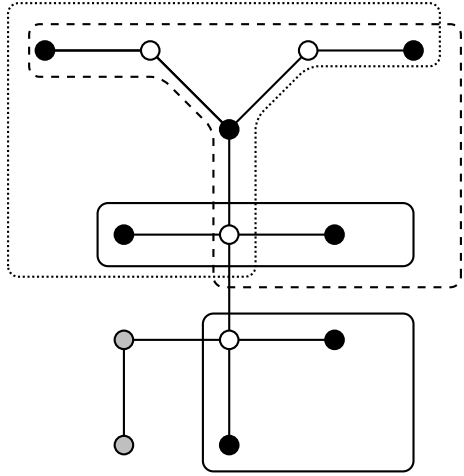

\begin{theorem} \label{thm:tree-St-wins}
  Staller wins on a predominated tree $(T,D)$ if and only if $T$ contains a substructure $F \in \cS$ such that no vertex from $X(F)$ belongs to $D$.  
\end{theorem}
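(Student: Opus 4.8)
The plan is to pass to the hypergraph formulation via Observation~\ref{obs:closed-neigh-hg-predomination}, so that Staller plays as Maker on $\cN(T,D)$, whose winning sets are exactly the closed neighborhoods $N_T[v]$ with $v\in V(T)\setminus D$, and then to treat the two implications separately.

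For the direction ``$\Leftarrow$'', suppose $F\in\cS$ is a substructure of $T$ with $X(F)\cap D=\emptyset$. The key observation is that for every black vertex $v\in X(F)$ the degree condition $\deg_T(v)=\deg_F(v)$ from Definition~\ref{def:substructure} forces $N_T[v]=N_F[v]$, and since $v\notin D$ this closed neighborhood is a winning set of $\cN(T,D)$. Hence the hyperedges of $\cN(F,V(F)\setminus X(F))$, which are precisely the sets $N_F[v]$ for $v\in X(F)$, all occur among the winning sets of $\cN(T,D)$, so $\cN(F,V(F)\setminus X(F))\in\cF$ is a subhypergraph of $\cN(T,D)$. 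Proposition~\ref{prop:cF} then hands Maker (Staller) a win. I expect this direction to be routine.

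For ``$\Rightarrow$'' I would argue the contrapositive: assuming every substructure $F\in\cS$ of $T$ satisfies $X(F)\cap D\neq\emptyset$, I want to produce a winning strategy for Dominator, i.e.\ for Breaker on $\cN(T,D)$. A first reduction is monotonicity: enlarging $D$ only deletes winning sets, so a Staller win on $(T,D)$ would force a Staller win on $(T,\emptyset)$, whence $T$ has a substructure by Theorem~\ref{thm:cS}; thus all the content lies in the regime where substructures are present but each carries a black vertex of $D$. I would then set up an induction on $|V(T)|$ that processes a leaf $\ell$ with support $s$. Under the hypothesis no support can have two leaves outside $D$, for otherwise $\ell$--$s$--$\ell'$ would be a copy of $S(P_2)$ whose two black leaves avoid $D$, a forbidden substructure; this pins down the local configuration. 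Dominator answers every threat through $\ell$ by the fixed pairing response inside $N_T[\ell]=\{\ell,s\}$, removes the processed pendant to obtain a smaller tree $T'$ with $D'=D\cap V(T')$, and plays elsewhere according to the Dominator strategy provided by the induction hypothesis on $(T',D')$.

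The crux --- and the step I expect to be the main obstacle --- is to choose the reduction so that the hypothesis is preserved, namely that every substructure of $T'$ again carries a black vertex of $D'$. This is delicate precisely because of the degree condition in Definition~\ref{def:substructure}: deleting a pendant lowers degrees, which can both destroy old substructures and create new ones (a vertex that was not fixed-degree in $T$ may become a legitimate black vertex in $T'$). One therefore has to track how the substructures of $T$ transform under the reduction, and the compatibility guaranteed by Proposition~\ref{prop:coloring-substructures} is what keeps the black/white labeling coherent throughout this bookkeeping. The genuinely hard sub-case is a black vertex $v\in D$ lying on every substructure of an entire region of $T$ (as happens for a leaf of a subdivided star): here one cannot simply drop a single pendant but must detach $v$ together with the white connectors it forces, verifying through the degree condition that the surgery creates no substructure whose black vertices avoid $D$. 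Once this structural analysis is secured, the induction closes and Dominator's combined strategy hits every winning set $N_T[v]$ with $v\notin D$.
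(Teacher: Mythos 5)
The forward direction of your argument is exactly the paper's: the degree condition gives $N_T[v]=N_F[v]$ for $v\in X(F)$, so $\cN(F,V(F)\setminus X(F))$ sits inside $\cN(T,D)$ and Proposition~\ref{prop:cF} applies. That part is fine.

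The reverse direction, however, has a genuine gap in two places. First, your strategy composition is not sound as stated: you pair $\ell$ with its support $s$ inside $N_T[\ell]=\{\ell,s\}$ \emph{and} run an inductive Dominator strategy on $T'=T-\ell$, but $s$ belongs to both the pair and $T'$. If Staller plays $s$, Dominator must either answer $\ell$ (so Staller effectively gets a free extra move in the $T'$-game, breaking the induction) or answer inside $T'$ (after which Staller takes $\ell$ and completes $N_T[\ell]$ whenever $\ell\notin D$). The paper avoids this overlap by deleting the \emph{white} neighbour $y$ of an unpredominated black leaf $x$: the components of $T-y$ together with the separate edge $xy$ are pairwise disjoint, so Dominator can legitimately play component-wise. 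Second, and more importantly, the step you yourself flag as the crux --- that the hypothesis ``every substructure meets $D$ in a black vertex'' survives the reduction --- is precisely the content of the paper's Claim~\ref{claim:1}, and you leave it unproven; your closing sentence ``once this structural analysis is secured, the induction closes'' is an admission that the proof is incomplete. The paper's resolution is concrete: if a substructure $F$ of a component $T_i$ has $x_i$ as a fixed-degree vertex, then $F^+=T[V(F)\cup\{x,y\}]$ is a substructure of $T$ with $X(F^+)=X(F)\cup\{x\}$, and since $x\notin D$ some black vertex of $F$ itself must lie in $D$. You would also need to handle the complementary case in which \emph{every} black leaf of $T$ is predominated (the paper does this by inducting on $n(T)+|D|$ and deleting a leaf from $D$, using that every nontrivial substructure has at least two leaves); your induction on $|V(T)|$ alone does not accommodate that case.
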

\begin{proof}
    Suppose first that a predominated tree $(T,D)$ contains a substructure $F \in \cS$ such that $X(F) \cap D = \emptyset$. Since every vertex $v \in X(F)$ has the same degree in $T$ and $F$, the closed neighborhoods $N_F[v]$ and $N_T[v]$ are the same. As $v \notin D$ for every $v \in X(F)$, the closed neighborhoods of the black vertices of $F$ are all present in $\cN(T, D)$. It implies that $\cN(T, D)$ contains the subhypergraph $\cN(F, V(F)\setminus X(F))$ and, by Proposition~\ref{prop:cF}, we may conclude that Staller wins the game.

    In the second part of the proof, we suppose that $D$ contains at least one black vertex from each substructure in $T$ and show that Dominator wins the game. We proceed by induction on $k = n(T)+ |D|$. Under the present condition, the smallest value of $k$ is $2$. It covers two cases. First, if $T$ is just an isolated vertex, it is a substructure with one black vertex, which is predominated. Then, Dominator wins the game, as there is no winning set for Staller. The second case is $(P_2, \emptyset)$ when the condition is satisfied as $P_2$ contains no substructure from $\cS$. Clearly, Dominator wins the MBD game on $(P_2, \emptyset)$.

    Assume that $k \ge 3$ and the statement holds for every predominated tree $(T', D')$ with $n(T') +|D'| \leq k-1$. Consider a predominated tree $(T,D)$ where $n(T) +|D|=k$ and every substructure $F \in \cS$ in $T$ contains a predominated black vertex. If every black leaf of $T$ is predominated, then we remove a black leaf $x$ from $D$. Since every substructure contains at least two leaves, it remains true in $(T, D\setminus \{ x\})$ that every substructure has a leaf in $D$. Hence, we can apply the hypothesis and get that Dominator wins on $(T, D\setminus \{ x\})$. Therefore, Dominator also wins on $(T,D)$ where more vertices are predominated.

    The other case is when a black leaf $x$ in $T$ is not predominated. Let $y$ be the neighbor of $x$. Consider the components $T_0, \dots, T_{\ell}$ of $T-y$ such that $T_0$ is the isolated vertex $x$. Let $x_0=x, \dots , x_\ell$, respectively, be the vertices in the components $T_0, \dots, T_\ell$ that are adjacent to $y$ in $T$. We continue to refer to the colors of vertices as they were defined in $T$. In particular, $x$ is black and $y$ is white. We prove the following claim to show that the hypothesis can be applied to $T-y$.

    \begin{claim} \label{claim:1}
        Every substructure $F \in \cS$ in $T-\{x,y\}$ contains a vertex in $X(F) \cap D$.
    \end{claim}
   \proof Let $F$ be a substructure in $T_i$, for $i \in [\ell]$. If $F$ does not contain $x_i$, then $F$ is also a substructure in $T$ and, by our condition, $X(F) \cap D$ is not empty. If $x_i$ is not a fixed-degree vertex in $F$, then again, $F$ is a substructure in $T$ and satisfies the condition $X(F) \cap D \neq \emptyset$. If $x_i$ is a fixed-degree vertex in $F$, then consider the tree $F^+$ obtained by adding vertices $y$ and $x$ to $F$; that is $F^+$ is the subtree induced by $V(F) \cup \{x,y\}$ in $T$. As $F \in \cS$, it is a subdivision of a tree with $x_i$ being a non-subdivision vertex. Therefore, $F^+$ can also be obtained as a subdivision of a tree, and $X(F^+)= X(F) \cup \{x\}$. We conclude that $\cF^+$ is a substructure in $T$ and, since $x \notin D$, a fixed-degree vertex of $F$ belongs to $D$. \smallqed

   By Claim~\ref{claim:1}, the hypothesis can be applied to each component, and we infer that Dominator can win on $T_1, \dots , T_\ell$ (even if Staller starts the game). Clearly, Dominator also wins on the one-edge graph induced by $x$ and $y$. It implies that Dominator wins on the union of these trees. Finally, putting back the edges $yx_i$, for $i \in [\ell]$, we obtain $T$ where Dominator can also win. It finishes the proof of the theorem. \qed
   \end{proof}

\begin{theorem} \label{thm:critical-trees}
If $(T, D)$ is a predominated tree, then the following holds. 
\begin{itemize}
    \item[$(i)$]  $(T, D)$ is  MBD critical if and only if there exists a substructure $F \in \cS$ in $T$ such that $D= V(T) \setminus X(F)$.
    \item[$(ii)$] $(T, D)$ is an atomic MBD critical tree if and only if $T \in \cS$ and $D= V(T) \setminus X(T)$.
\end{itemize}
  \end{theorem}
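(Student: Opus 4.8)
The plan is to reduce both parts of Theorem~\ref{thm:critical-trees} to the characterization of Staller-win predominated trees (Theorem~\ref{thm:tree-St-wins}) combined with the definition of MBD criticality, isolating a single combinatorial fact about substructures as the crux. For part $(i)$, recall Theorem~\ref{thm:tree-St-wins} says Staller wins on $(T,D')$ exactly when $T$ has a substructure $F$ with $X(F)\cap D'=\emptyset$. For the forward implication I would start from the fact that Staller wins on $(T,D)$, obtaining a substructure $F$ with $X(F)\cap D=\emptyset$, i.e.\ $D\subseteq V(T)\setminus X(F)$. For the reverse inclusion I take any $v\notin D$ and use criticality: Dominator wins on $(T,D\cup\{v\})$, so by Theorem~\ref{thm:tree-St-wins} \emph{every} substructure, in particular $F$, must meet $D\cup\{v\}$; since $X(F)\cap D=\emptyset$ this forces $v\in X(F)$, giving $D=V(T)\setminus X(F)$. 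For the backward implication I am given $F$ with $D=V(T)\setminus X(F)$; Staller winning on $(T,D)$ is immediate, and the content is to show Dominator wins on $(T,D\cup\{v\})$ for every $v\in V(T)\setminus D=X(F)$. Since $D=V(T)\setminus X(F)$, a rival substructure $F'$ has $X(F')\cap D=\emptyset$ precisely when $X(F')\subseteq X(F)$, so everything hinges on the claim that \emph{any substructure $F'$ with $X(F')\subseteq X(F)$ in fact satisfies $X(F')=X(F)$}; granting this, such an $F'$ contains $v$, so Dominator wins and criticality follows.

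I expect this claim to be the main obstacle, and I would prove it using the fixed-degree mechanism of Definition~\ref{def:substructure} together with color compatibility (Proposition~\ref{prop:coloring-substructures}). Compatibility first ensures that a vertex black in $F$ but missing from $X(F')$ cannot appear in $V(F')$ at all (it could only be white there, a contradiction). The key leverage is that for a black vertex $b\in X(F')\subseteq X(F)$ we have $\deg_{F'}(b)=\deg_T(b)=\deg_F(b)$, so \emph{all} $T$-edges at $b$ lie simultaneously in $F'$ and in $F$. I would then propagate along $F=S(T')$: if $b_1\in X(F')$ is joined in $F$ to a black vertex $b_2$ through a white subdivision vertex $w$, then $b_1w\in E(F')$, hence $w\in V(F')$ and (by compatibility) $w$ is again a degree-$2$ subdivision vertex of $F'$; tracing its second $F'$-neighbor $c$, and using that $c$ is black and therefore carries all its $T$-edges in $F$, forces $wc\in E(F)$ and thus $c=b_2$, so $b_2\in X(F')$. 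As the black vertices of $F$ are connected through white vertices (because $T'$ is a tree) and $X(F')\neq\emptyset$, this yields $X(F)\subseteq X(F')$ and hence equality. The degenerate cases $F=P_1$ or $F'=P_1$ force $T=P_1$ and are checked directly.

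For part $(ii)$, the forward direction begins from part $(i)$: an atomic MBD critical $(T,D)$ is MBD critical, so $D=V(T)\setminus X(F)$ for some substructure $F$, and the task is to upgrade $F$ to all of $T$. I would first show $V(F)=V(T)$: if some vertex lay outside $V(F)$, connectivity of $T$ supplies a boundary edge $xy$ with $x\in V(F)$ and $y\notin V(F)$, so $y\in D$; now either $x$ is white in $F$, whence $x\in D$ and the edge $xy$ contradicts the independence of $D$ required by Definition~\ref{def:atomic-MBD}, or $x$ is black, whence its fixed degree puts all its $T$-neighbors, including $y$, inside $F$, a contradiction. With $V(F)=V(T)$, both $F$ and $T$ are trees on the same vertex set with $F\subseteq T$, so an edge count gives $F=T$, that is $T\in\cS$ and $D=V(T)\setminus X(T)$.

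Finally, for the backward direction of part $(ii)$ I take $T=S(T')\in\cS$ with $D=V(T)\setminus X(T)$. Here $D$ is exactly the set of subdivision vertices of $S(T')$; each such vertex has degree $2$ and is adjacent only to vertices of $X(T)$, so $D$ is independent and contains no isolated vertex, verifying the atomicity conditions of Definition~\ref{def:atomic-MBD}. Taking $F=T$ in the already-proved part $(i)$ then shows $(T,D)$ is MBD critical, hence atomic MBD critical. The only delicate point is the independence of $D$, handled by the boundary-edge argument above in the forward direction and by the subdivision structure of $\cS$ in the backward direction; the isolated-vertex condition is automatic for connected trees on at least two vertices and trivial for $P_1$ (where $D=\emptyset$).
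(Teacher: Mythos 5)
Your proposal is correct and follows essentially the same route as the paper: both directions of (i) are reduced to Theorem~\ref{thm:tree-St-wins}, and (ii) is derived from (i) using the independence of $D$ and the fixed-degree condition to force $F=T$. The one notable difference is that you isolate and fully prove the key claim that a substructure $F'$ with $X(F')\subseteq X(F)$ must satisfy $X(F')=X(F)$ (via compatibility of colorings and propagation along subdivision vertices), whereas the paper disposes of this step with the terse assertion that no proper subset of $V(F)$ induces a substructure in $T$; your more detailed argument is sound and arguably fills in a point the paper leaves to the reader.
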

\begin{proof}
    (i) Suppose first that $F \in \cS$ is a substructure in $T$ and $D= V(T) \setminus X(F)$. As no black vertex from $F$ belongs to $D$, Theorem~\ref{thm:tree-St-wins} implies that Staller wins the game on $(T,D)$. Further, the only possibility of extending $D$ is adding vertices from $X(F)$ to it.  For every $v \in X(F)$, the set $D'= D \cup \{v\}$ contains a black vertex from every substructure of $T$. To see this, observe that, by definition of a substructure, no proper subset of $V(F)$ induces a substructure in $T$. Applying Theorem~\ref{thm:tree-St-wins} again, we conclude that Dominator wins the game on $(T, D \cup \{v\})$ for every $v \in V(T) \setminus D $, and hence $(T,D)$ is MBD critical.

    Suppose now that $(T,D)$ is an MBD critical predominated tree. By Theorem~\ref{thm:tree-St-wins}, there is at least one substructure $F \in \cS$ in $T$ such that no black vertex of $F$ belongs to $D$. Assume for a contradiction that some further vertices of $T$ are also omitted from $D$. Let $v$ be such a vertex. That is, $v \notin D$ and $v \notin X(F)$. Now,  Theorem~\ref{thm:tree-St-wins} implies that Staller can win the game on $(T, D \cup \{v\})$ as the black vertices of $F$ remain outside $D \cup \{v\}$. It contradicts the criticality of $(T,D)$ and, in turn, finishes the proof of~(i).

    (ii)  An atomic MBD critical tree $(T, D)$ surely satisfies the necessary and sufficient condition for MBD critical trees that we just proved in~(i), ensuring the existence of a substructure $F \in \cS$ in $T$ such that $D= V(T) \setminus X(F)$. Furthermore, by definition of an atomic MBD critical tree there is no edge inside $D$. Thus, every edge of $T$ is incident to at least one black vertex from $F$. As these are the fixed-degree vertices of $F$, no edge of $T$ is incident to vertices outside $F$. Since $(T,D)$ is an atomic MBD critical tree, it contains no isolated vertex. Then $T$ itself is the substructure $F$ and, by part~(i),  $D= V(T) \setminus X(T)$ as stated.

    To prove the other direction of~(ii), we observe that (i) implies the MBD criticality of $(T,D)$, as $T \in \cS$ and $D$ is the set of white vertices of $T$. It is also clear that no substructure different from $P_1$ contains isolated vertices or edges between two white vertices. It follows that, under the given conditions, $(T,D)$ is an atomic MBD critical graph. This finishes the proof of (ii). \qed
\end{proof}

Theorem~\ref{thm:critical-trees} is illustrated in Fig.~\ref{fig:atomic-critical-2x}, where two predominated trees satisfying (i), and one predominated tree satisfying (ii) are shown. In the figure, predominated vertices are marked with a square around them. 

\begin{figure}[ht!]
\begin{center}
\begin{tikzpicture}[scale=0.7,style=thick,x=1cm,y=1cm]
\def\vr{5pt}
\begin{scope}[xshift=0cm, yshift=0cm] 
\coordinate(v1) at (0.0,0.0);
\coordinate(v2) at (2.0,0.0);
\coordinate(v3) at (4.0,0.0);
\coordinate(v4) at (6.0,0.0);
\coordinate(v5) at (8.0,0.0);
\coordinate(v6) at (10.0,0.0);
\coordinate(v7) at (12.0,0.0);
\coordinate(v8) at (8.0,2.0);
\coordinate(v9) at (10.0,2.0);
\draw (v1) -- (v7);  
\draw (v5) -- (v8) -- (v9);  
\filldraw[fill=white, draw=black, line width=0.3mm] (1.7, -0.3) rectangle +(0.6, 0.6);
\filldraw[fill=white, draw=black, line width=0.3mm] (5.7, -0.3) rectangle +(0.6, 0.6);
\filldraw[fill=white, draw=black, line width=0.3mm] (9.7, -0.3) rectangle +(0.6, 0.6);
\filldraw[fill=white, draw=black, line width=0.3mm] (7.7, 1.7) rectangle +(0.6, 0.6);

\foreach \i in {1,3,5,7,9} 
{
\draw(v\i)[fill=black] circle(\vr);
}
\foreach \i in {2,4,6,8} 
{
\draw(v\i)[fill=white] circle(\vr);
}
\end{scope}
\begin{scope}[xshift=0cm, yshift=15cm] 
\coordinate(v1) at (0.0,0.0);
\coordinate(v2) at (2.0,0.0);
\coordinate(v3) at (4.0,0.0);
\coordinate(v4) at (6.0,0.0);
\coordinate(v5) at (8.0,0.0);
\coordinate(v6) at (10.0,0.0);
\coordinate(v7) at (12.0,0.0);
\coordinate(v8) at (8.0,2.0);
\coordinate(v9) at (10.0,2.0);
\coordinate(v10) at (2.0,-2.0);
\coordinate(v11) at (0.5,-3.5);
\coordinate(v12) at (2.0,-3.5);
\coordinate(v13) at (3.5,-3.5);
\draw (v1) -- (v7);  
\draw (v5) -- (v8) -- (v9);  
\draw (4,2) -- (8,2); 
\draw (v11) -- (v10) -- (v2);
\draw (v12) -- (v10) -- (v13);
\draw[rounded corners](-0.7,-0.7)--(-0.7,0.7)--(7.3,0.7)--(7.3,2.7)--(12.7,2.7)--(12.7,-0.7)--cycle;
\filldraw[fill=white, draw=black, line width=0.3mm] (5.7, 1.7) rectangle +(0.6, 0.6);
\draw(6.00, 2.0)[fill=lightgray] circle(\vr);

\filldraw[fill=white, draw=black, line width=0.3mm] (3.7, 1.7) rectangle +(0.6, 0.6);
\draw(4.00, 2.0)[fill=lightgray] circle(\vr);

\filldraw[fill=white, draw=black, line width=0.3mm] (1.7, -0.3) rectangle +(0.6, 0.6);
\filldraw[fill=white, draw=black, line width=0.3mm] (1.7, -2.3) rectangle +(0.6, 0.6);
\filldraw[fill=white, draw=black, line width=0.3mm] (5.7, -0.3) rectangle +(0.6, 0.6);
\filldraw[fill=white, draw=black, line width=0.3mm] (9.7, -0.3) rectangle +(0.6, 0.6);
\filldraw[fill=white, draw=black, line width=0.3mm] (7.7, 1.7) rectangle +(0.6, 0.6);
\filldraw[fill=white, draw=black, line width=0.3mm] (0.2, -3.8) rectangle +(0.6, 0.6);
\filldraw[fill=white, draw=black, line width=0.3mm] (1.7, -3.8) rectangle +(0.6, 0.6);
\filldraw[fill=white, draw=black, line width=0.3mm] (3.2, -3.8) rectangle +(0.6, 0.6);
\foreach \i in {1,3,5,7,9,11,12,13} 
{
\draw(v\i)[fill=black] circle(\vr);
}
\foreach \i in {2,4,6,8,10} 
{
\draw(v\i)[fill=white] circle(\vr);
}
\node at (12,1.5) {$F$};
\end{scope}
\begin{scope}[xshift=0cm, yshift=7cm] 
\coordinate(v1) at (0.0,0.0);
\coordinate(v2) at (2.0,0.0);
\coordinate(v3) at (4.0,0.0);
\coordinate(v4) at (6.0,0.0);
\coordinate(v5) at (8.0,0.0);
\coordinate(v6) at (10.0,0.0);
\coordinate(v7) at (12.0,0.0);
\coordinate(v8) at (8.0,2.0);
\coordinate(v9) at (10.0,2.0);
\coordinate(v10) at (2.0,-2.0);
\coordinate(v11) at (0.5,-3.5);
\coordinate(v12) at (2.0,-3.5);
\coordinate(v13) at (3.5,-3.5);
\draw (v1) -- (v7);  
\draw (v5) -- (v8) -- (v9);  
\draw (4,2) -- (8,2); 
\draw (v11) -- (v10) -- (v2);
\draw (v12) -- (v10) -- (v13);
\draw[rounded corners](1.3,-4.0)--(1.3,-1.4)--(4.0,-1.4)--(4.0,-4.0)--cycle;
\filldraw[fill=white, draw=black, line width=0.3mm] (5.7, 1.7) rectangle +(0.6, 0.6);
\draw(6.00, 2.0)[fill=lightgray] circle(\vr);

\filldraw[fill=white, draw=black, line width=0.3mm] (3.7, 1.7) rectangle +(0.6, 0.6);
\draw(4.00, 2.0)[fill=lightgray] circle(\vr);
\filldraw[fill=white, draw=black, line width=0.3mm] (1.7, -0.3) rectangle +(0.6, 0.6);
\filldraw[fill=white, draw=black, line width=0.3mm] (1.7, -2.3) rectangle +(0.6, 0.6);
\filldraw[fill=white, draw=black, line width=0.3mm] (5.7, -0.3) rectangle +(0.6, 0.6);
\filldraw[fill=white, draw=black, line width=0.3mm] (9.7, -0.3) rectangle +(0.6, 0.6);
\filldraw[fill=white, draw=black, line width=0.3mm] (7.7, 1.7) rectangle +(0.6, 0.6);
\filldraw[fill=white, draw=black, line width=0.3mm] (0.2, -3.8) rectangle +(0.6, 0.6);
\filldraw[fill=white, draw=black, line width=0.3mm] (9.7, 1.7) rectangle +(0.6, 0.6);
\filldraw[fill=white, draw=black, line width=0.3mm] (-0.3, -0.3) rectangle +(0.6, 0.6);
\filldraw[fill=white, draw=black, line width=0.3mm] (3.7, -0.3) rectangle +(0.6, 0.6);
\filldraw[fill=white, draw=black, line width=0.3mm] (7.7, -0.3) rectangle +(0.6, 0.6);
\filldraw[fill=white, draw=black, line width=0.3mm] (11.7, -0.3) rectangle +(0.6, 0.6);

\foreach \i in {1,3,5,7,9,11,12,13} 
{
\draw(v\i)[fill=black] circle(\vr);
}
\foreach \i in {2,4,6,8,10} 
{
\draw(v\i)[fill=white] circle(\vr);
\node at (3.5,-2) {$F'$};
}\end{scope}

\end{tikzpicture}
\caption{Two MBD critical predominated trees (above) and an atomic MBD critical predominated tree. The substructures $F$ and $F'$ as in Theorem~\ref{thm:critical-trees}~(i) are also marked. The predominated vertices are marked with a square around them.}
\label{fig:atomic-critical-2x}
\end{center}
\end{figure}
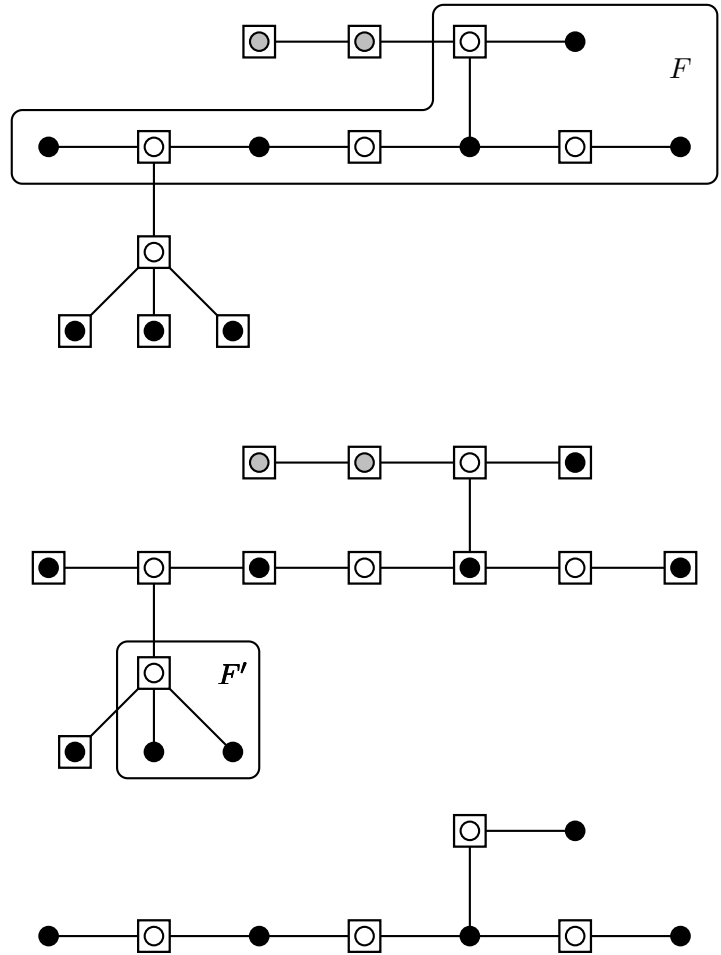

\begin{remark}
  It is straightforward to convince oneself that Theorems~\ref{thm:tree-St-wins} and~\ref{thm:critical-trees} remain valid over the class of predominated forests. 
\end{remark}

From the algorithmic point of view, Theorem~\ref{thm:critical-trees} leads to the the following result. 

\begin{theorem}
\label{thm:critical-tree-linear}
It can be checked in linear time whether a given predominated tree is MBD critical. 
\end{theorem}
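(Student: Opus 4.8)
The plan is to reduce the decision problem to a handful of purely local, countable conditions and then observe that each can be verified by a constant number of scans of $T$. The starting point is Theorem~\ref{thm:critical-trees}$(i)$: writing $B=V(T)\setminus D$, the pair $(T,D)$ is MBD critical if and only if $T$ contains a substructure $F\in\cS$ with $X(F)=B$. So the whole task becomes deciding whether some $F\in\cS$ has black set exactly $B$. First I would argue that such an $F$, if it exists, is \emph{uniquely determined} by $B$. Indeed, every black vertex is a fixed-degree vertex, so all its incident edges of $T$ must lie in $F$; and since $F=S(T')$ is bipartite with the black vertices forming one side, every edge of $F$ joins a black vertex to a white one. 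Hence $E(F)$ must be exactly the set of edges of $T$ incident to $B$, and $V(F)=B\cup N_T(B)$. There is therefore no freedom in the choice of $F$, and the question is only whether this forced graph belongs to $\cS$.

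The second step is to translate ``the forced graph is a substructure with black set $B$'' into verifiable conditions. I would isolate three: $(A)$ $B$ is independent in $T$ (otherwise $F$ would contain an edge between two black vertices, impossible in $S(T')$); $(B)$ every vertex of $D$ has either $0$ or exactly $2$ neighbours in $B$ (a $D$-vertex adjacent to $B$ is a white subdivision vertex of $F$, which must have degree $2$ — a single black neighbour would make it a white leaf, and three or more would exceed degree $2$); and $(C)$ the forced graph $F$ is connected, so that the suppression of its degree-$2$ white vertices yields a \emph{tree} $T'$ with $F=S(T')$. The key simplification is that, assuming $(A)$ and $(B)$, the forced $F$ is a forest (a subgraph of the tree $T$) with $|V(F)|=|B|+d_2$ and $|E(F)|=2d_2$, where $d_2$ is the number of $D$-vertices having exactly two black neighbours; its number of components is thus $|B|-d_2$. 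Consequently $(C)$ is equivalent to the single arithmetic identity $d_2=|B|-1$, which eliminates any need for a separate connectivity traversal. Conversely, when $(A)$–$(C)$ hold, suppressing the degree-$2$ whites of the connected forest $F$ produces a tree $T'$ with $F=S(T')$ and $X(F)=B$, so $F$ is exactly the required substructure.

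With this characterization, the linear-time implementation is straightforward. Computing $B$ from the input $(T,D)$ is $O(n)$; the degenerate case $B=\emptyset$ (where Staller has no winning set) is rejected immediately. Condition $(A)$ is checked by a single pass over the edge list. Conditions $(B)$ and the count $d_2$ are obtained together by scanning, for each vertex of $D$, its incident edges and tallying the neighbours that lie in $B$; since this amounts to inspecting every edge a bounded number of times, it costs $O\!\left(\sum_{v}\deg_T(v)\right)=O(n)$. Finally the comparison $d_2=|B|-1$ is $O(1)$. The overall procedure therefore runs in linear time.

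The main obstacle I anticipate is entirely structural rather than algorithmic: it is the argument that a substructure realizing a prescribed black set $B$ is forced and unique, and that its membership in $\cS$ is captured \emph{exactly} by the local degree conditions $(A)$ and $(B)$ together with connectivity. Getting the degree bookkeeping right — in particular justifying $|E(F)|=2d_2$ and the component count $|B|-d_2$, and thereby replacing the global connectivity test by the $O(1)$ identity $d_2=|B|-1$ — is the delicate part. Once this combinatorial reformulation is in place, all the game-theoretic content has been absorbed into Theorem~\ref{thm:critical-trees}, and the linear-time bound follows routinely.
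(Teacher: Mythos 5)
Your proposal is correct and follows essentially the same route as the paper: both reduce the problem via Theorem~\ref{thm:critical-trees}$(i)$ to checking that the subgraph forced by $X=V(T)\setminus D$ (spanned by the closed neighborhoods of $X$) is a substructure from $\cS$, verified through local bipartiteness/degree conditions plus connectivity. The only difference is cosmetic: you replace the paper's explicit connectivity test and bipartition check by the independence of $B$, the ``$0$ or $2$ black neighbours'' condition, and the counting identity $d_2=|B|-1$, which yields the same linear bound.
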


\begin{proof}
Let us observe the procedure given in Algorithm~\ref{alg:trees}. 

\medskip
\begin{algorithm}[H]
    \KwIn{Predominated tree $(T,D)$}
    \KwOut{True if $(T,D)$ is MBD critical, false otherwise} \vspace{2mm}
    $X$ = $V(T)\setminus D$\;
    $T'$ = $T[\cup_{x\in X}N[x]]$\;
  \uIf{$T'$ is connected}{
    let $(V_0',V_1')$ be the bipartition of $T'$\;
    \uIf{$X=V_i'$, for some $i\in \{0,1\}$, \textbf{and} $\deg_{T'}(y) = 2$, for every $y\in V_{1-i}'$, }{
    \KwRet{true}\;
  }
  \uElse{
    \KwRet{false}\;
  }}
  \uElse{
    \KwRet{false}\;
  }
  
   \caption{Fast recognition of predominated MBD critical trees.}
\label{alg:trees}
\end{algorithm}

\medskip
By Theorem~\ref{thm:critical-trees}, $(T, D)$ is  MBD critical if and only if there exists a substructure $F \in \cS$ in $T$ such that $D= V(T) \setminus X(F)$. If so, then $X = V(T)\setminus D$ must form the fixed-degree vertices of the substructure $F$. Since $F$ is connected, the subtree $T'= T[\cup_{x\in X}N[x]]$ must also be connected. If $(V_0',V_1')$ is the bipartition of $T'$, then all the vertices of $X$ form one bipartition set, say $V_i'$, as otherwise two vertices from $X$ would be at an odd distance, but this cannot happen in a substructure $F \in \cS$. 

Moreover, the condition $\deg_{T'}(y) = 2$ for every $y\in V_{1-i}'$ implies that each vertex of $V(F)\setminus X$ is a subdivision vertex. It follows that $F \in \cS$ and since $D= V(T) \setminus X(F)$, Theorem~\ref{thm:critical-trees} implies that $(T,D)$ is MBD critical if and only if the conditions checked by the algorithm are fulfilled. 

As for the time complexity, it is straightforward to see that all the tasks in Algorithm~\ref{alg:trees} can be performed in linear time. 
\qed
\end{proof}

\section{MBD critical cactus graphs}
\label{sec:MBD-critical-cactuses}

\begin{definition} \label{def:cC}
   The \emph{double-odd replacement} of an edge $e=xy$ in a graph $G$ means removing the edge $e$ and replacing it with two internally vertex-disjoint $x,y$-paths both of odd length.
   
   For an $F \in \cS$, we say that the cactus graph $H$ is an \emph{$F$-cactus} if either $H \cong F$, or $H$ can be obtained from $F$ by double-odd 
   replacements of some edges of $F$. 
   
   The set $\cC$ of \emph{cactus-substructures} is then defined as follows:
   $$ \cC= \{ H: H \mbox{ is an $F$-cactus for some } F \in \cS \}.$$
   
   For an $F$-cactus $H$, the set of \emph{fixed-degree vertices} is specified as $X(H)$ corresponding to the bipartition class of $H$ that contains $X(F)$. 

   We say that a cactus $H \in \cC$ is a {\em substructure} in a graph $G$ if $H$ is a subgraph of $G$ and  $\deg_G(v)=\deg_H(v)$ holds for every $v \in X(H)$. 
\end{definition}

We emphasize that in the above definition, a substructure is not necessarily an induced subgraph. 

Note that a double-odd replacement of an edge $e=xy$ in $F \in \cS$ replaces $e$ with an even cycle $C_e$. It might happen that $C_e$ is a $2$-cycle; this case is equivalent to having only one edge between $x$ and $y$. As $F$ is a bipartite graph, vertices $x$ and $y$ belong to different classes in $F$. The double-odd replacement of $e$ keeps that property. 

In particular, we have that an $F$-cactus is bipartite, for every $F \in \cS$. This further implies that \emph{fixed-degree vertices} are well defined, as all the vertices in $X(F)$ remain in the same bipartition class after every double-odd replacement.

Let us say that the vertices in $X(H)$ are {\em black} while the remaining vertices of $H$ are {\em white}. The color classes then correspond to the bipartition of $H$, see Fig.~\ref{fig:cactus}. 

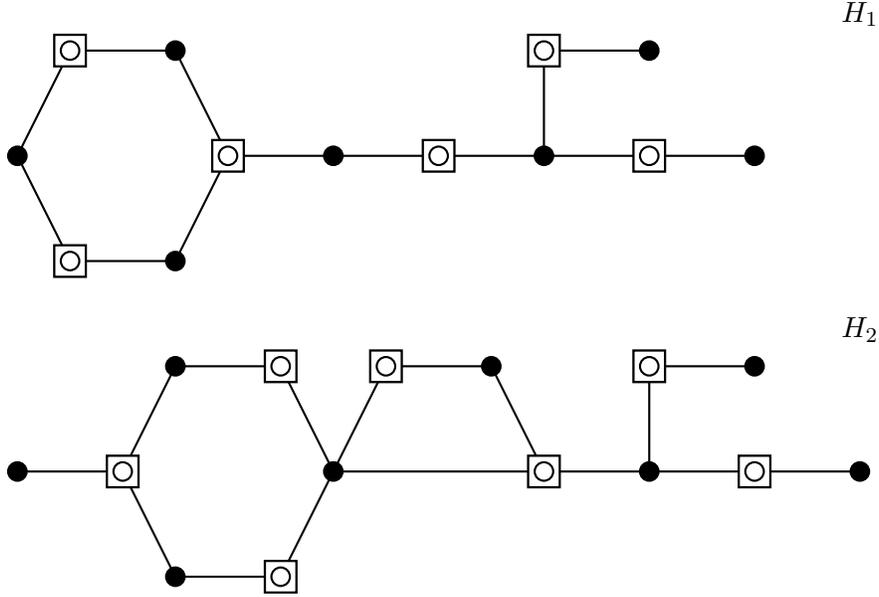
\begin{figure}[ht!]
\vspace*{1cm}
\begin{center}
\begin{tikzpicture}[scale=0.7,style=thick,x=1cm,y=1cm]
\def\vr{5pt}

\begin{scope}[xshift=0cm, yshift=0cm] 
\coordinate(v1) at (-2.0,0.0);
\coordinate(v2) at (2.0,0.0);
\coordinate(v3) at (4.0,0.0);
\coordinate(v4) at (6.0,0.0);
\coordinate(v5) at (8.0,0.0);
\coordinate(v6) at (10.0,0.0);
\coordinate(v7) at (12.0,0.0);
\coordinate(v8) at (8.0,2.0);
\coordinate(v9) at (10.0,2.0);
\coordinate(v10) at (-1, 2.0);
\coordinate(v11) at (1.0, 2.0);
\coordinate(v12) at (-1.0,-2.0);
\coordinate(v13) at (1.0,-2.0);
\draw (v2) -- (v7);  
\draw (v5) -- (v8) -- (v9);  
\draw (v1) -- (v10) -- (v11) -- (v2);
\draw (v1) -- (v12) -- (v13) -- (v2);

\filldraw[fill=white, draw=black, line width=0.3mm] (1.7, -0.3) rectangle +(0.6, 0.6);
\filldraw[fill=white, draw=black, line width=0.3mm] (5.7, -0.3) rectangle +(0.6, 0.6);
\filldraw[fill=white, draw=black, line width=0.3mm] (9.7, -0.3) rectangle +(0.6, 0.6);
\filldraw[fill=white, draw=black, line width=0.3mm] (7.7, 1.7) rectangle +(0.6, 0.6);
\filldraw[fill=white, draw=black, line width=0.3mm] (-1.3, 1.7) rectangle +(0.6, 0.6);
\filldraw[fill=white, draw=black, line width=0.3mm] (-1.3, -2.3) rectangle +(0.6, 0.6);
\foreach \i in {1,3,5,7,9,11,13} 
{
\draw(v\i)[fill=black] circle(\vr);
}
\foreach \i in {2,4,6,8,10,12} 
{
\draw(v\i)[fill=white] circle(\vr);
}
\node at (14.0,2.7) {$H_1$};
\end{scope}
\begin{scope}[xshift=0cm, yshift=-6cm] 
\coordinate(v1) at (-2.0,0.0);
\coordinate(v2) at (0.0,0.0);
\coordinate(v3) at (4.0,0.0);
\coordinate(v4) at (8.0,0.0);
\coordinate(v5) at (10.0,0.0);
\coordinate(v6) at (12.0,0.0);
\coordinate(v7) at (14.0,0.0);
\coordinate(v8) at (10.0,2.0);
\coordinate(v9) at (12.0,2.0);
\coordinate(v10) at (1, 2.0);
\coordinate(v11) at (3.0, 2.0);
\coordinate(v12) at (1.0,-2.0);
\coordinate(v13) at (3.0,-2.0);
\coordinate(v14) at (5, 2.0);
\coordinate(v15) at (7.0, 2.0);

\draw (v1) -- (v2); 
\draw (v4) -- (v7); 
\draw (v5) -- (v8) -- (v9);  
\draw (v2) -- (v10) -- (v11) -- (v3);
\draw (v2) -- (v12) -- (v13) -- (v3);
\draw (v3) -- (v14) -- (v15) -- (v4);
\draw (v3)  -- (v4);

\filldraw[fill=white, draw=black, line width=0.3mm] (-0.3, -0.3) rectangle +(0.6, 0.6);
\filldraw[fill=white, draw=black, line width=0.3mm] (9.7, 1.7) rectangle +(0.6, 0.6);
\filldraw[fill=white, draw=black, line width=0.3mm] (11.7, -0.3) rectangle +(0.6, 0.6);
\filldraw[fill=white, draw=black, line width=0.3mm] (2.7, 1.7) rectangle +(0.6, 0.6);
\filldraw[fill=white, draw=black, line width=0.3mm] (2.7, -2.3) rectangle +(0.6, 0.6);
\filldraw[fill=white, draw=black, line width=0.3mm] (7.7, -0.3) rectangle +(0.6, 0.6);
\filldraw[fill=white, draw=black, line width=0.3mm] (4.7, 1.7) rectangle +(0.6, 0.6);
\foreach \i in {1,3,5,7,9,10,12,15} 
{
\draw(v\i)[fill=black] circle(\vr);
}
\foreach \i in {2,4,6,8,11,13,14} 
{
\draw(v\i)[fill=white] circle(\vr);
}
\node at (14.0, 2.7) {$H_2$};
\end{scope} 

\end{tikzpicture}
\caption{Two atomic MBD critical cactus graphs $H_1$ and $H_2$, both obtained from the atomic MBD critical tree depicted at the bottom of Fig.~\ref{fig:atomic-critical-2x}.}
\label{fig:cactus}
\end{center}
\end{figure}


\begin{theorem} \label{thm:cC}
    For every $H \in \cC$, the predominated graph $(H, V(H)\setminus X(H))$ is an atomic MBD critical graph.
\end{theorem}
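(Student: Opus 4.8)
The plan is to verify the two defining properties of an atomic MBD critical graph separately, after first disposing of the structural conditions in Definition~\ref{def:atomic-MBD}. Since every $H \in \cC$ is bipartite with bipartition $\big(X(H),\, V(H)\setminus X(H)\big)$, the predominated set $D = V(H)\setminus X(H)$ is independent; moreover every $H\in\cC$ with $H\ne P_1$ is connected and has at least two vertices, so no vertex of $D$ is isolated (the case $H=P_1$ gives $D=\emptyset$ and is the trivial atomic MBD critical graph). Hence it suffices to prove that $(H,D)$ is MBD critical, i.e.\ that Staller wins on $(H,D)$ while Dominator wins on $(H, D\cup\{v\})$ for every $v\in X(H)$. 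By Observation~\ref{obs:closed-neigh-hg-predomination} these are Maker--Breaker statements about $\cN(H,D)$, whose hyperedges are exactly the closed neighborhoods $N_H[w]$ of the black vertices $w\in X(H)$.

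For the Dominator direction I would build, for each fixed $v\in X(H)$, a \emph{pairing strategy}. Since Staller starts, Dominator plays second, so it is enough to exhibit a matching of $H$ saturating $X(H)\setminus\{v\}$: pairing each black $w\ne v$ with its matched (necessarily white) neighbor $M(w)$ places a disjoint pair inside every remaining winning set $N_H[w]$, and the standard pairing response guarantees Dominator a vertex in each such set. To produce the matching, write $H$ as an $F$-cactus for $F=S(T)\in\cS$ and root $T$ at the vertex playing the role of $v$. In $F$ the ``parent-pointer'' matching, sending each non-root black vertex to the subdivision vertex on its parent edge, is a bijection between $V(F)\setminus X(F)$ and $X(F)\setminus\{v\}$, hence saturates $X(F)\setminus\{v\}$. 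I then lift this matching through each double-odd replacement: an edge $e$ that was a matching edge is replaced by an even cycle $C_e$ which I match perfectly, while an edge $e$ outside the matching is replaced by a cycle whose internal vertices I match among themselves, leaving its two (already matched) endpoints untouched. A short case check---using that the two replacement paths have odd length, so that every even cycle admits both a perfect matching and a perfect matching of its interior---shows these local matchings are consistent and together saturate $X(H)\setminus\{v\}$.

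For the Staller direction I would argue by induction on the number of double-odd replacements used to build $H$ from $F\in\cS$. The base case $H=F$ is exactly Proposition~\ref{prop:cF} (equivalently Theorem~\ref{thm:cS} for the tree underlying $F$), since $\cN(F, V(F)\setminus X(F))\in\cF$. For the inductive step, suppose Staller wins on the closed-neighborhood hypergraph of a cactus $H_0\in\cC$ and let $H_1$ be obtained by replacing one further edge $e=xy$, with $x\in X(H_0)$ and $y\notin X(H_0)$, by an even cycle $C_e$ made of two odd $x,y$-paths. The idea is that $C_e$ acts as a ``reinforced subdivided edge'': Staller augments her inherited strategy by a local forcing sequence inside $C_e$, claiming the white cycle-vertices in order so that each claim completes all but one vertex of some black neighborhood and thereby forces Dominator's reply onto the corresponding black cycle-vertex. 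Because $C_e$ is even, this sequence terminates at the far endpoint with a genuine double threat, which reproduces exactly the single threat that the edge $e$ provided in $H_0$, after which Staller resumes her winning strategy on $H_0$.

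The main obstacle is making the Staller simulation rigorous: unlike the pairing argument it must be robust against a Dominator who plays inside $C_e$ out of turn or refuses the ``forced'' replies, and it must interleave cleanly with the inherited strategy on $H_0$. I would handle this by phrasing the step as a self-contained lemma---\emph{a double-odd replacement of an edge in the Maker-win closed-neighborhood hypergraph of a cactus in $\cC$ keeps it Maker-win}---and proving it by an imagination/strategy-stealing argument in which Staller treats untimely Dominator moves inside $C_e$ as harmless extra free moves. The matching-existence claim for the Dominator direction is the only other point requiring care, but it is routine once the rooting and the odd-length condition are fixed.
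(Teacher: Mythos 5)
Your reduction to MBD criticality and your treatment of the Dominator direction follow the paper's route (pairing strategy via a near-perfect matching $M(H,v)$), but there are two genuine gaps. The larger one is the Staller direction: your induction on the number of double-odd replacements hinges on a ``local forcing sequence'' inside each new even cycle that you yourself flag as the main obstacle, and as stated it does not close. The difficulty is exactly the one you name: the simulation must specify what Staller does when Dominator ignores a threat or plays inside $C_e$ prematurely, and the claim that the forcing chain ``terminates at the far endpoint with a double threat reproducing the single threat that $e$ provided in $H_0$'' presupposes a particular form of Staller's inherited strategy on $H_0$ that you have not established. The paper sidesteps all of this with a direct global strategy that needs no induction on replacements: Staller's first move is a white cut-vertex, splitting $H$ into two parts each containing black vertices; whenever Dominator answers in one part, Staller plays a white cut-vertex of the part Dominator has not yet touched (or, if that part has shrunk to a single black vertex, all of whose white neighbours Staller has already claimed, she plays that vertex and wins). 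Since the untouched part strictly shrinks at every step, this terminates with Staller claiming a full closed neighbourhood. If you want to keep your inductive route you would need to state and prove the ``double-odd replacement preserves Maker's win'' lemma in full, including the out-of-turn cases; the cut-vertex strategy is considerably shorter.

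The second gap is in the Dominator direction. You build $M(H,v)$ by rooting $T$ at ``the vertex playing the role of $v$'' and lifting the parent-pointer matching through the replacements. This only covers $v\in X(F)$. But $X(H)$ also contains the black \emph{internal} vertices of the newly inserted odd paths, which correspond to no vertex of $T$ or $F$, and criticality requires a matching missing only $v$ for \emph{every} $v\in X(H)$. The paper handles this residual case by taking the black branch vertex $u$ of the cycle containing $v$ (or any black vertex of an end-cycle), constructing $M(H,u)$ as in your case (ii), and then swapping matched and unmatched edges along the alternating $u,v$-path of degree-two internal vertices to move the exposed vertex from $u$ to $v$. Your lifting of the matching through double-odd replacements (perfect matching of the even cycle when $e$ was matched, matching of the path interiors when it was not) is correct as far as it goes, so this gap is local and fixable, but it must be addressed for the criticality claim to hold.
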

\begin{proof}
The statement is true if $H$ is an isolated vertex, so we may assume that $H$ contains at least two vertices.
 In $(H, V(H)\setminus X(H))$, a vertex is predominated if and only if it is white. By Definitions~\ref{def:cS} and~\ref{def:cC}, every leaf in $H$ is black. Moreover, a cycle in $H$ is either an ``end-cycle" and its only vertex having a degree higher than $2$ is white, or the cycle has exactly two vertices of degree higher than $2$ and one of them is black, the other one is white.

Staller first claims a white cut-vertex $u$, breaking the graph into two components $H_1$ and $H_2$. Once Dominator replies in the component $H_i$, Staller's next move is a white cut-vertex from the unplayed component $H_{3-i}$. She continues playing according to this strategy, that is, she repeatedly plays a white cut-vertex in the component that remained unplayed so far. It is always possible to do that, except when the unplayed component  consists of a single black vertex $v$. But in that case all (white) neighbors of $v$ have already been played by Staller, and she wins the game by playing $v$ and, thus, claiming the closed neighborhood of an undominated vertex.  Note that the order of the considered component decreases with each move of Staller, and therefore, after a finite number of moves, only one black vertex $v$ remains.  

We next show that Dominator wins on $(H,D)$ if $D$ is obtained by adding a vertex $x \in X(H)$ to $V(H)\setminus X(H) $. If there is a matching $M$ in $H$ that covers all vertices except $x$, then Dominator can play according to the pairing strategy. That is, when Staller claims a vertex $u$ and $uu' \in M$, then Dominator's reply is $u'$ (and if $u'$ is already claimed, or Staller just claimed $x$, then Dominator claims an arbitrary vertex). This way, Dominator dominates every vertex, except possibly the predominated vertex $x$, and wins.

We denote by $M(H,x)$ a matching in $H$ that covers every vertex except $x$, see Fig.~\ref{fig:matchings}. To prove the existence of such a matching, we consider the following three cases. 

(i) If $H$ is a subdivided tree, i.e.~$H \in \cS$, we may get such a matching $M(H,x)$ by rooting the tree in $x$ and pairing every black vertex with its (white) parent. As the white vertices are the subdivision vertices, each of them has only one child. This way, the obtained matching that covers all vertices except $x$. 

(ii) If $H$ is an $F$-cactus for an $F \in \cS$ and $x \in X(F)$, we start with the matching $M(F, x)$ in $F$. Suppose now that, in the process of building $H$ from $F$, an edge $uv \in E(F)$ receives a double-odd replacement $uw_1 \dots w_k v$ and $uz_1 \dots z_\ell v$. Note that both $k$ and $\ell$ are even numbers (possibly equal to zero). If $uv \in M(F,x)$, we take the edges $uw_1, w_2w_3, \dots, w_kv$ and $z_1z_2, \dots, z_{\ell-1}z_\ell$ instead of $uv$. If $uv \notin M(F,x)$, we add the edges $w_1w_2, \dots, w_{k-1}w_k$ and $z_1z_2, \dots, z_{\ell-1}z_\ell$ to the matching. This way, we iteratively obtain a matching $M(H,x)$ in $H$ that omits only the vertex $x$. For an example of the described construction see the matching of $H_1$ in Fig.~\ref{fig:matchings}.

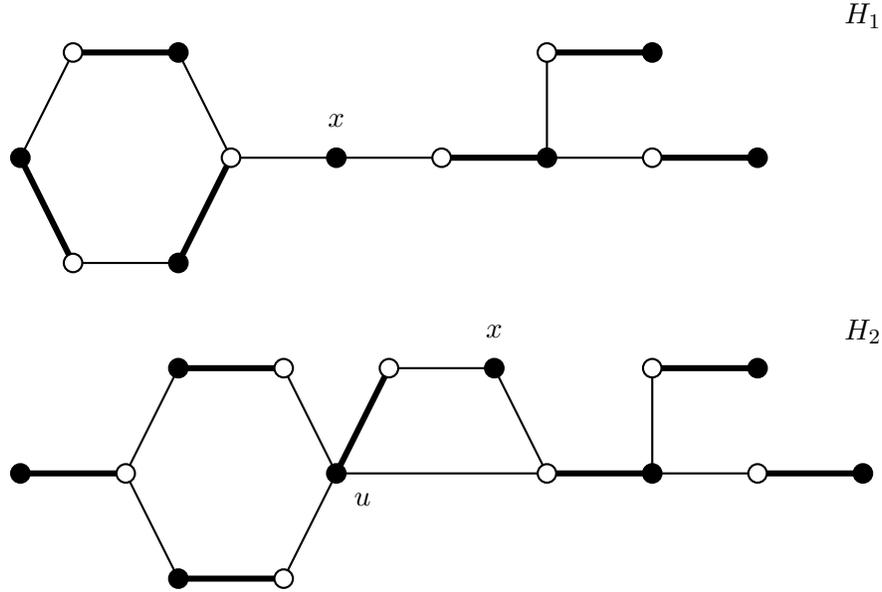
\begin{figure}[ht!]
\begin{center}
\begin{tikzpicture}[scale=0.7,style=thick,x=1cm,y=1cm]
\def\vr{5pt}

\begin{scope}[xshift=0cm, yshift=0cm] 
\coordinate(v1) at (-2.0,0.0);
\coordinate(v2) at (2.0,0.0);
\coordinate(v3) at (4.0,0.0);
\coordinate(v4) at (6.0,0.0);
\coordinate(v5) at (8.0,0.0);
\coordinate(v6) at (10.0,0.0);
\coordinate(v7) at (12.0,0.0);
\coordinate(v8) at (8.0,2.0);
\coordinate(v9) at (10.0,2.0);
\coordinate(v10) at (-1, 2.0);
\coordinate(v11) at (1.0, 2.0);
\coordinate(v12) at (-1.0,-2.0);
\coordinate(v13) at (1.0,-2.0);
\draw (v2) -- (v7);  
\draw (v5) -- (v8) -- (v9);  
\draw (v1) -- (v10) -- (v11) -- (v2);
\draw (v1) -- (v12) -- (v13) -- (v2);
\draw[line width=0.8mm] (v4) -- (v5);
\draw[line width=0.8mm] (v6) -- (v7);
\draw[line width=0.8mm] (v8) -- (v9);
\draw[line width=0.8mm] (v2) -- (v13);
\draw[line width=0.8mm] (v1) -- (v12);
\draw[line width=0.8mm] (v10) -- (v11);

\foreach \i in {1,3,5,7,9,11,13} 
{
\draw(v\i)[fill=black] circle(\vr);
}
\foreach \i in {2,4,6,8,10,12} 
{
\draw(v\i)[fill=white] circle(\vr);
}
\node at (4,0.7) {$x$};
\node at (14.0,2.7) {$H_1$};
\end{scope}
\begin{scope}[xshift=0cm, yshift=-6cm] 
\coordinate(v1) at (-2.0,0.0);
\coordinate(v2) at (0.0,0.0);
\coordinate(v3) at (4.0,0.0);
\coordinate(v4) at (8.0,0.0);
\coordinate(v5) at (10.0,0.0);
\coordinate(v6) at (12.0,0.0);
\coordinate(v7) at (14.0,0.0);
\coordinate(v8) at (10.0,2.0);
\coordinate(v9) at (12.0,2.0);
\coordinate(v10) at (1, 2.0);
\coordinate(v11) at (3.0, 2.0);
\coordinate(v12) at (1.0,-2.0);
\coordinate(v13) at (3.0,-2.0);
\coordinate(v14) at (5, 2.0);
\coordinate(v15) at (7.0, 2.0);

\draw (v1) -- (v2); 
\draw (v4) -- (v7); 
\draw (v5) -- (v8) -- (v9);  
\draw (v2) -- (v10) -- (v11) -- (v3);
\draw (v2) -- (v12) -- (v13) -- (v3);
\draw (v3) -- (v14) -- (v15) -- (v4);
\draw (v3)  -- (v4);
\draw[line width=0.8mm] (v1) -- (v2);
\draw[line width=0.8mm] (v4) -- (v5);
\draw[line width=0.8mm] (v6) -- (v7);
\draw[line width=0.8mm] (v8) -- (v9);
\draw[line width=0.8mm] (v10) -- (v11);
\draw[line width=0.8mm] (v12) -- (v13);
\draw[line width=0.8mm] (v3) -- (v14);
\foreach \i in {1,3,5,7,9,10,12,15} 
{
\draw(v\i)[fill=black] circle(\vr);
}
\foreach \i in {2,4,6,8,11,13,14} 
{
\draw(v\i)[fill=white] circle(\vr);
}
\node at (7,2.7) {$x$};
\node at (14.0,2.7) {$H_2$};
\node at (4.5,-0.5) {$u$};
\end{scope} 

\end{tikzpicture}
\caption{Matchings $M(H_1,x)$ and $M(H_2,y)$, as defined in the proof of Theorem~\ref{thm:cC}.}
\label{fig:matchings}
\end{center}
\end{figure}

(iii) If $H$ is an $F$-cactus for an $F \in \cS$, but $x\in X(H) \setminus X(F)$, it is a black vertex from a cycle $C$. Let $u$ be the black vertex of degree at least $3$ from $C$ (or any black vertex of $C$, if $C$ is an end-cycle). We again start from the matching $M(F, u)$ on the tree $F$. Using $M(F,u)$, analogously to the approach from case~(ii), we can construct a matching covering all vertices of $H$ except $u$. Now it remains to locate a path between $x$ and $u$ with all internal vertices of degree two and swap the matched vertices along that path, obtaining the matching $M(H,x)$ that covers all vertices of $H$ except $x$. For an example of the described construction see the matching of $H_2$ in Fig.~\ref{fig:matchings}.

We have proved that $(H, V(H)\setminus X(H))$ is an MBD critical graph for every $H \in \cC$. It is clear that $H$ is connected and, as $V(H)\setminus X(H)$ is a partite class in $H$, there are no edges between predominated vertices. Thus, $(H, V(H)\setminus X(H))$ is an atomic MBD critical graph. 
\qed        
\end{proof}

Theorem~\ref{thm:cC} and Proposition~\ref{prop:MBDcritical} readily imply the following statement.
\begin{corollary} \label{cor:cactus-substr}
   If  $H \in \cC$ is a substructure in a graph $G$, then $(G, V(G) \setminus X(H))$ is an MBD critical graph. 
\end{corollary}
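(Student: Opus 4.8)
The plan is to deduce Corollary~\ref{cor:cactus-substr} directly from Theorem~\ref{thm:cC} together with Observation~\ref{obs:closed-neigh-hg-predomination} and Proposition~\ref{prop:critical-hgs-properties}, so that almost no new game analysis is required. First I would recall that, by Observation~\ref{obs:closed-neigh-hg-predomination}, the outcome of the MBD game on any predominated graph is governed entirely by its closed-neighborhood hypergraph: the game on $(G, V(G)\setminus X(H))$ corresponds to the Maker-Breaker game on $\cN(G, V(G)\setminus X(H))$, whose hyperedges are the closed neighborhoods $N_G[v]$ for $v \in X(H)$. The crucial point is that, because $H$ is a \emph{substructure} in $G$, every fixed-degree vertex $v \in X(H)$ satisfies $\deg_G(v)=\deg_H(v)$, and hence $N_G[v]=N_H[v]$. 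Consequently the hypergraph $\cN(H, V(H)\setminus X(H))$ is a subhypergraph of $\cN(G, V(G)\setminus X(H))$.

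The next step is to invoke Theorem~\ref{thm:cC}, which tells us that $(H, V(H)\setminus X(H))$ is MBD critical, and in particular that Staller wins this game. Translating through Observation~\ref{obs:closed-neigh-hg-predomination}, Maker wins the Maker-Breaker game on $\cN(H, V(H)\setminus X(H))$. Since this is a subhypergraph of $\cN(G, V(G)\setminus X(H))$, monotonicity of Maker-Breaker games (adding winning sets never hurts Maker, as recalled in the introduction and formalized in Proposition~\ref{prop:critical-hgs-properties}~(i)) yields that Maker wins on $\cN(G, V(G)\setminus X(H))$ as well. Translating back, Staller wins the MBD game on $(G, V(G)\setminus X(H))$.

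It remains to verify the criticality condition, namely that Dominator wins on $(G, V(G)\setminus X(H) \cup \{v\})$ for every $v \in X(H)$; equivalently, adding any single fixed-degree vertex $v$ to the predominated set turns the game into a Dominator win. Here I would reuse the matching argument already developed in the proof of Theorem~\ref{thm:cC}: for each $x \in X(H)$ that proof exhibits a matching $M(H,x)$ in $H$ covering every vertex of $H$ except $x$. Via the substructure property $N_G[v]=N_H[v]$, the edges of this matching still pair up vertices so that Dominator's pairing strategy dominates every non-predominated vertex of $G$ except possibly $x$, which is now predominated; thus Dominator wins on $(G, (V(G)\setminus X(H))\cup\{x\})$. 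I expect the main obstacle to be precisely this criticality step, because in $G$ (as opposed to $H$) the vertices outside $X(H)$ may carry additional edges and even extra components, so I would need to argue carefully that the pairing strategy restricted to $M(H,x)$ suffices: the only winning sets for Staller are the $N_G[v]=N_H[v]$ with $v\in X(H)$, each of which is blocked by Dominator's matched reply, and vertices of $G$ not in $V(H)$ are irrelevant since they never appear in a winning set. Phrasing this reduction cleanly — that the game on $(G,\cdot)$ collapses to the game on the hyperedges indexed by $X(H)$, which are identical to those of $H$ — is the delicate part, and is exactly where the ``substructure, not induced subgraph'' remark and Proposition~\ref{prop:MBDcritical} do the work.
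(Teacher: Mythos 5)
Your argument is correct, but it takes a different route from the one the paper intends. The paper derives the corollary from Theorem~\ref{thm:cC} together with Proposition~\ref{prop:MBDcritical}: deleting every edge of $G$ that joins two predominated vertices and then every isolated predominated vertex preserves MBD criticality in both directions, and under the substructure condition $\deg_G(v)=\deg_H(v)$ for $v\in X(H)$ (plus the bipartiteness of $H$ with classes $X(H)$ and $V(H)\setminus X(H)$, so that every edge of $H$ meets $X(H)$) these two operations reduce $(G,\,V(G)\setminus X(H))$ to exactly $(H,\,V(H)\setminus X(H))$, which is critical by Theorem~\ref{thm:cC}. That derivation uses only the \emph{statement} of Theorem~\ref{thm:cC} and transfers criticality wholesale. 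You instead unpack criticality into its two halves and verify each directly in $G$: the Staller-win via $N_G[v]=N_H[v]$ and monotonicity of Maker--Breaker games, and the Dominator-win on each $(G,(V(G)\setminus X(H))\cup\{x\})$ by re-running the pairing strategy with the matching $M(H,x)$ constructed inside the proof of Theorem~\ref{thm:cC}. This is sound --- each remaining winning set $N_G[v]=N_H[v]$ with $v\in X(H)\setminus\{x\}$ contains the matched pair of $v$, and vertices of $G$ outside these winning sets are irrelevant --- but it reaches into the internals of that proof rather than quoting its conclusion. The paper's reduction is shorter and more reusable; your version has the merit of making explicit why the extra edges and vertices of $G$ outside the substructure cannot help Staller.
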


We conclude the section with the following.

\begin{problem}
Is it true that a predominated cactus graph $(G,D)$ is MBD critical if and only if it contains a cactus-substructure $H \in \cC$ and  $D= V(G) \setminus X(H)$?
\end{problem}

We remark that this problem is equivalent to the question whether $\cC$ contains all atomic MBD critical cactus graphs.

\section{Concluding discussions} 
\label{sec:families-MB-critical-hgs}

In this section, we explore additional aspects of criticality that complement our previous investigations, draw conclusions from earlier results, and highlight promising directions for future research.  

\subsection{More atomic MBD critical graphs}

In Definition~\ref{def:cC}, we defined a double-odd replacement of an edge $e$. By this operator, we obtained a set $\cC$ of atomic MBD critical cactus graphs.
In an analogous way, we define the $k$-odd replacement of an edge $e=xy$ of a graph $G$ and obtain a wider class of atomic MBD critical graphs.
\begin{definition}
     \label{def:cA}
   For an integer $k\ge 2$, the \emph{$k$-odd replacement} of an edge $e=xy$ in a graph $G$ is the replacement of the edge $e$ with $k$ internally vertex-disjoint $x,y$-paths of arbitrary odd lengths.
   The class $\cA$ is the minimal family that satisfies conditions $(i)$ and $(ii)$:
   \begin{itemize}
       \item[$(i)$] $\cS \subseteq \cA$.
       \item[$(ii)$] If $A \in \cA$ and $A'$ is obtained from $A$ by a $k$-odd replacement of an edge $e \in E(A)$, then $A' \in \cA$. The set $X(A')$ is the partition class of the bipartite graph $A'$ such that $X(A) \subseteq X(A')$.
      \end{itemize}
\end{definition}
The following theorem can be proved along the lines of the proof of Theorem~\ref{thm:cC}. 
\begin{theorem} \label{thm:cA}
    For every $A \in \cA$, the predominated graph $(A, V(A) \setminus X(A))$ is an atomic MBD critical graph.
\end{theorem}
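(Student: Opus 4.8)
The plan is to mirror the proof of Theorem~\ref{thm:cC}, since a $k$-odd replacement is a natural generalization of a double-odd replacement: instead of inserting two internally vertex-disjoint odd $x,y$-paths, we insert $k$ such paths. First I would establish the structural invariants of members of $\cA$ by induction on the number of $k$-odd replacements applied: every $A \in \cA$ is connected and bipartite, $X(A)$ is a well-defined bipartition class, every leaf of $A$ is black (lies in $X(A)$), and each internal vertex of a newly inserted path has degree~$2$. Bipartiteness is preserved because an odd path between $x$ and $y$ keeps $x$ and $y$ in opposite classes, exactly as noted after Definition~\ref{def:cC} for the double-odd case; this also guarantees that $X(A)$ is well-defined in condition~(ii), since all old black vertices stay together in one class.

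Next I would verify that Staller wins on $(A, V(A)\setminus X(A))$. The cut-vertex strategy from Theorem~\ref{thm:cC} transfers verbatim: Staller repeatedly claims a white cut-vertex in whichever component remains wholly unplayed, the order of that component strictly decreases at each step, and she wins once the unplayed part is a single black vertex whose (white) neighbors she has already claimed, thereby completing the closed neighborhood of an undominated vertex. The only point to check is that the white cut-vertex structure survives a $k$-odd replacement with $k \ge 2$; since every inserted path has odd length and its internal vertices have degree~$2$, the same alternating black/white pattern holds along each branch, so the argument goes through unchanged.

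For the critical direction I would show that for every $x \in X(A)$ there is a matching $M(A,x)$ covering all vertices except $x$, which gives Dominator a pairing strategy on $(A, (V(A)\setminus X(A))\cup\{x\})$ exactly as in Theorem~\ref{thm:cC}. I would build $M(A,x)$ by induction on the $k$-odd replacements, in parallel with cases~(ii) and~(iii) there. Starting from a near-perfect matching $M(A_0,x)$ on the base member $A_0 \in \cS$ (or on the tree underlying the current structure), I process each replaced edge $uv$: each inserted odd path $u\,w_1\cdots w_m\,v$ has $m$ even, so if $uv \in M$ I take $uw_1,\,w_2w_3,\dots,w_mv$ along one chosen path and pair up the internal vertices $w_1w_2,\dots,w_{m-1}w_m$ along the others, while if $uv \notin M$ I pair internal vertices $w_1w_2,\dots,w_{m-1}w_m$ along every inserted path. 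In the case $x \in X(A)\setminus X(A_0)$ lies on an inserted path, I first build a matching missing only the relevant branch-vertex $u$ and then swap along the degree-$2$ path from $u$ to $x$, just as in case~(iii). Finally, connectedness and the absence of edges inside the predominated set $V(A)\setminus X(A)$ (which is a partite class) give that $(A, V(A)\setminus X(A))$ is atomic.

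The main obstacle I anticipate is the matching bookkeeping when $k \ge 3$: with three or more parallel odd paths sharing the endpoints $u$ and $v$, one must choose exactly one path to ``carry'' the $uv$-alternation when $uv \in M$ and confirm that every other path admits a perfect internal matching on its degree-$2$ vertices, with no two paths competing for $u$ or $v$. A clean way to handle this uniformly is to observe that each inserted internal path $w_1\cdots w_m$ has even order and induces a path, hence has a perfect matching on its own, and that $u,v$ are matched at most once overall; organizing the argument around ``at most one inserted path absorbs each endpoint'' keeps the parity accounting transparent and avoids double-covering. Everything else is a routine adaptation of Theorem~\ref{thm:cC}.
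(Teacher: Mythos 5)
Your proposal is correct and is essentially the paper's own approach: the paper gives no detailed argument for Theorem~\ref{thm:cA}, stating only that it ``can be proved along the lines of the proof of Theorem~\ref{thm:cC}'', and your write-up is exactly that adaptation (bipartiteness and well-definedness of $X(A)$ under $k$-odd replacements, the white-cut-vertex strategy for Staller, and the near-perfect matching $M(A,x)$ built by rerouting through exactly one inserted path when the replaced edge was matched and perfectly matching the even-order interiors of the remaining paths). Your parity bookkeeping for $k\ge 3$ and the swap along a degree-two path for $x\in X(A)\setminus X(A_0)$ match cases~(ii) and~(iii) of that proof, so nothing further is needed.
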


We are curious whether the same approach could be extended further to cover wider families of graphs.

\subsection{Criticality for Dominator}

The main part of this paper studies MBD critical predominated graphs from Staller's point of view. However, the analogous situation for Dominator's win may also be interesting to explore.
\begin{definition} \label{def:D-critical}
A predominated graph $(G,D)$ is \emph{MBD Dominator-critical}, if $D\neq \emptyset$, Dominator wins in the Staller-start game on $(G,D)$, and Staller wins on $(G, D\setminus \{v\})$ for every $v \in D$.   
\end{definition}
For instance, take a star with four leaves $\ell_1, \dots, \ell_4$ and subdivide each edge twice to obtain the graph $R$. Let $D=\{\ell_1, \ell_2, \ell_3\}.$ By Theorem~\ref{thm:tree-St-wins}, Dominator wins on $(R, D)$, but every proper subset $D'$ of $D$ allows Staller to win on $(R, D')$. Hence, $(R,D)$ is MBD Dominator-critical.

Another example is depicted in Fig.~\ref{fig:D-critical}. 
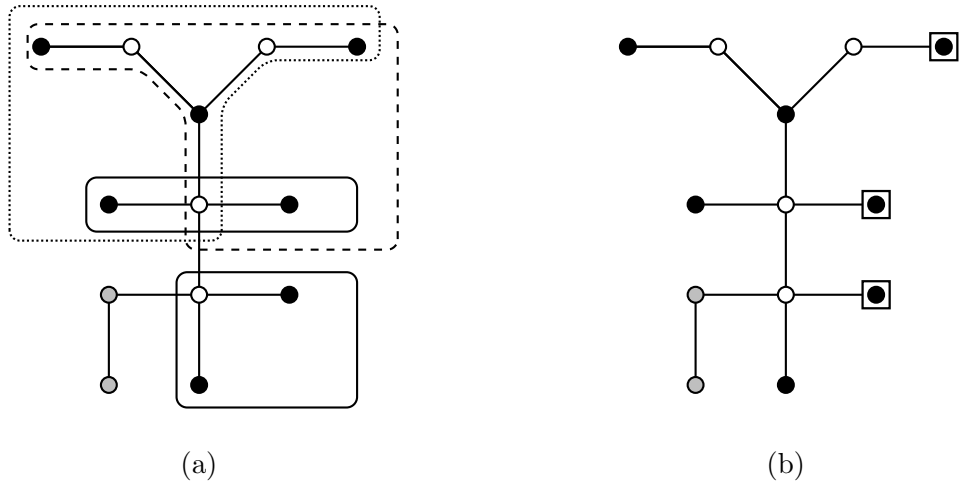
\begin{figure}[ht!]
\begin{center}
\begin{tikzpicture}[scale=0.6,style=thick,x=1cm,y=1cm]
\def\vr{5pt}
\begin{scope}[xshift=0cm, yshift=0cm]
\coordinate(v1) at (2.0,0.0);
\coordinate(v2) at (4.0,0.0);
\coordinate(v4) at (2.0,2.0);
\coordinate(v5) at (4.0,2.0);
\coordinate(v6) at (6.0,2.0);
\coordinate(v7) at (2.0,4.0);
\coordinate(v8) at (4.0,4.0);
\coordinate(v9) at (6.0,4.0);
\coordinate(v10) at (4.0,6.0);
\coordinate(v11) at (0.5,7.5);
\coordinate(v12) at (2.5,7.5);
\coordinate(v13) at (5.5,7.5);
\coordinate(v14) at (7.5,7.5);
\draw (v2) -- (v5) -- (v4);  
\draw (v6) -- (v5) -- (v8) -- (v7);  
\draw (v9) -- (v8) -- (v10) -- (v12) -- (v11);  
\draw (v10) -- (v13) -- (v14);  
\draw (v10) -- (v12) -- (v11);  
\draw (v1) -- (v4);
\draw[rounded corners, densely dotted](-0.2,3.2)--(-0.2,8.4)--(8.0,8.4)--(8.0,7.2)--(5.5,7.2)--(4.5,6.2)--(4.5,3.2)--cycle;
\draw[rounded corners,dashed](8.4,3.0)--(8.4,8.0)--(0.2,8.0)--(0.2,7.0)--(2.7,7.0)--(3.7,6.0)--(3.7,3.0)--cycle;
\draw[rounded corners](1.5,3.4)--(1.5,4.6)--(7.5,4.6)--(7.5,3.4)--cycle;
\draw[rounded corners](3.5,-0.5)--(3.5,2.5)--(7.5,2.5)--(7.5,-0.5)--cycle;
\foreach \i in {2,6,7,9,10,11,14} 
{
\draw(v\i)[fill=black] circle(\vr);
}
\foreach \i in {5,8,12,13} 
{
\draw(v\i)[fill=white] circle(\vr);
}
\foreach \i in {1,4} 
{
\draw(v\i)[fill=lightgray] circle(\vr);
}
\node at (4.0,-1.8) {(a)};
\end{scope}
\begin{scope}[xshift=13cm, yshift=0cm]
\coordinate(v1) at (2.0,0.0);
\coordinate(v2) at (4.0,0.0);
\coordinate(v4) at (2.0,2.0);
\coordinate(v5) at (4.0,2.0);
\coordinate(v6) at (6.0,2.0);
\coordinate(v7) at (2.0,4.0);
\coordinate(v8) at (4.0,4.0);
\coordinate(v9) at (6.0,4.0);
\coordinate(v10) at (4.0,6.0);
\coordinate(v11) at (0.5,7.5);
\coordinate(v12) at (2.5,7.5);
\coordinate(v13) at (5.5,7.5);
\coordinate(v14) at (7.5,7.5);
\draw (v2) -- (v5) -- (v4);  
\draw (v6) -- (v5) -- (v8) -- (v7);  
\draw (v9) -- (v8) -- (v10) -- (v12) -- (v11);  
\draw (v10) -- (v13) -- (v14);  
\draw (v10) -- (v12) -- (v11);  
\draw (v1) -- (v4);
\filldraw[fill=white, draw=black, line width=0.3mm] (5.7, 1.7) rectangle +(0.6, 0.6);
\filldraw[fill=white, draw=black, line width=0.3mm] (5.7, 3.7) rectangle +(0.6, 0.6);
\filldraw[fill=white, draw=black, line width=0.3mm] (7.2, 7.2) rectangle +(0.6, 0.6);
\foreach \i in {2,6,7,9,10,11,14} 
{
\draw(v\i)[fill=black] circle(\vr);
}
\foreach \i in {5,8,12,13} 
{
\draw(v\i)[fill=white] circle(\vr);
}
\foreach \i in {1,4} 
{
\draw(v\i)[fill=lightgray] circle(\vr);
}
\node at (4.0,-1.8) {(b)};
\end{scope}

\end{tikzpicture}
\caption{The tree $T$ from Fig.~\ref{fig:substructures}. Picture $(a)$ emphasizes the four substructures of $T$, while $(b)$ shows three vertices marked by squares such that their predomination results in an MBD Dominator-critical tree.}  

\label{fig:D-critical}
\end{center}
\end{figure}

As we already noted, in Maker-Breaker games adding new winning sets is not a disadvantage for Maker, and removing winning sets is not a disadvantage for Breaker. Observation~\ref{obs:closed-neigh-hg-predomination} and Definition~\ref{def:D-critical} then directly imply the following properties. For part $(ii)$, we note that Dominator always wins if all vertices are predominated.
 \begin{observation} \enskip
     \label{obs:D-critical}
   \begin{itemize} 
       \item[$(i)$] If a predominated graph $(G,D) $ is MBD Dominator-critical and $D'\subseteq V(G)$ is a proper subset or a proper superset of $D$, then $(G,D')$ is not MBD Dominator-critical. 
       \item[$(ii)$] For a graph $G$, an MBD Dominator-critical predominated graph $(G,D)$ exists if and only if Staller wins the MBD game on $G$.
        \end{itemize}
 \end{observation}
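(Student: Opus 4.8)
The plan is to prove Observation~\ref{obs:D-critical} directly from the monotonicity of Maker-Breaker games together with Observation~\ref{obs:closed-neigh-hg-predomination}, which identifies the MBD game on $(G,D)$ with the Maker-Breaker game on $\cN(G,D)$. The key structural fact driving both parts is that enlarging $D$ only \emph{removes} winning sets from $\cN(G,D)$ (since a predominated vertex contributes no hyperedge), hence it can only help Dominator (Breaker), never Staller (Maker). Concretely, if $D_1 \subseteq D_2$ then $E(\cN(G,D_2)) \subseteq E(\cN(G,D_1))$, so a Staller-win on $(G,D_2)$ forces a Staller-win on $(G,D_1)$, and dually a Dominator-win on $(G,D_1)$ forces a Dominator-win on $(G,D_2)$.

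For part $(i)$, I would argue by contraposition using the monotonicity just described. Suppose $(G,D)$ is MBD Dominator-critical, so Dominator wins on $(G,D)$ and Staller wins on $(G,D\setminus\{v\})$ for every $v\in D$. If $D' \subsetneq D$ is a proper subset, pick any $v\in D\setminus D'$; since $D'\subseteq D\setminus\{v\}$, the monotonicity gives that Staller wins on $(G,D')$ as well, so $(G,D')$ cannot be Dominator-win and is therefore not MBD Dominator-critical. If instead $D\subsetneq D''$ is a proper superset, then the monotonicity shows Dominator wins on $(G,D'')$; but criticality for $(G,D'')$ would require Staller to win on $(G,D''\setminus\{w\})$ for every $w\in D''$, and choosing $w\in D''\setminus D$ gives $D\subseteq D''\setminus\{w\}$, whence Dominator wins on $(G,D''\setminus\{w\})$, contradicting the requirement. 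Thus $(G,D'')$ is not MBD Dominator-critical either.

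For part $(ii)$, I would prove both implications. For the forward direction, if an MBD Dominator-critical $(G,D)$ exists then $D\neq\emptyset$ by definition, so pick any $v\in D$; criticality guarantees Staller wins on $(G,D\setminus\{v\})$, and since $D\setminus\{v\}$ is a proper subset of $V(G)$-predomination at most all vertices, the existence of \emph{some} Staller-win predominated graph on $G$ is what we want. Actually the cleanest route is to observe directly that Staller wins on $(G,D\setminus\{v\})\supseteq(G,\emptyset)$ in the monotone sense: since $\emptyset \subseteq D\setminus\{v\}$, Staller's win on $(G,D\setminus\{v\})$ propagates to a Staller-win on $(G,\emptyset)$, which is exactly the MBD game on $G$. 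For the converse, assume Staller wins the MBD game on $G$, i.e.\ on $(G,\emptyset)$. As noted in the paper's remark preceding the observation, Dominator always wins when every vertex is predominated, i.e.\ on $(G,V(G))$ there is no winning set for Staller. Starting from $D=\emptyset$ and adding vertices one at a time, the outcome switches from Staller-win to Dominator-win at some stage; taking $D$ to be a minimal predominated set on which Dominator wins yields an MBD Dominator-critical graph, since removing any of its vertices returns to a Staller-win configuration.

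The only mild obstacle is the converse of part $(ii)$: one must be slightly careful that a \emph{minimal} Dominator-win predominated set is genuinely critical in the sense of Definition~\ref{def:D-critical}, namely that $D\neq\emptyset$ (which holds because $(G,\emptyset)$ is Staller-win) and that Staller wins on $(G,D\setminus\{v\})$ for \emph{every} $v\in D$ (which holds by minimality, since $D\setminus\{v\}$ is a strict subset on which Dominator cannot win, hence Staller wins). This is the standard ``walk down to a critical configuration'' argument, entirely parallel to the construction of MBD critical graphs via extending $D$ that appears earlier in the introduction, so no real difficulty is expected.
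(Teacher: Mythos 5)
Your proposal is correct and follows exactly the route the paper intends: the paper dispatches this observation with the one-line remark that enlarging $D$ only removes winning sets from $\cN(G,D)$ (so it helps Breaker/Dominator and hurts Maker/Staller), plus the note that Dominator wins when all vertices are predominated, and your write-up is a careful expansion of precisely that monotonicity argument, including the standard ``take an inclusion-minimal Dominator-win set'' step for the converse of $(ii)$. No gaps; the only blemish is the garbled first attempt at the forward direction of $(ii)$, which you immediately replace with the correct monotonicity argument.
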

Given a hypergraph $\cH=(V, E)$, a set $Y \subseteq V$ is a \emph{transversal} if $Y$ contains at least one vertex from every hyperedge. A transversal $Y$ is \emph{minimal} if no proper subset of it is a transversal in $\cH$.
If $T$ is a tree, we define the associated hypergraph $\cX_T$ on the vertex set of $T$ with the following edge set:
 \[ E(\cX_T)= \{X(S): \mbox{$S$ is a substructure in $T$}\};
 \]
 that is, every hyperedge of $\cX_T$ corresponds to the set of fixed-degree (black) vertices in a substructure in $T$. Any transversal in $\cX_T$ contains a black vertex from each substructure in $T$.
Applying this concept and Theorem~\ref{thm:tree-St-wins}, we obtain a characterization for MBD Dominator-critical trees.
\begin{proposition} \label{prop:D-critical-tree}
    Let $T$ be a tree, $D \subseteq V(T)$, and $\cX_T$ the hypergraph associated with $T$. The predominated graph $(T,D)$ is MBD Dominator-critical if and only if $\cX_T$ is not an empty hypergraph and $D$ is a minimal transversal in $\cX_T$.
\end{proposition}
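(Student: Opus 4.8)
The plan is to characterize MBD Dominator-critical trees by translating Definition~\ref{def:D-critical} into the language of transversals via Theorem~\ref{thm:tree-St-wins}. The central observation is that Theorem~\ref{thm:tree-St-wins} says Staller wins on $(T,D)$ if and only if some substructure $F\in\cS$ has $X(F)\cap D=\emptyset$; equivalently, Dominator wins on $(T,D)$ if and only if $D$ meets $X(F)$ for every substructure $F$, which is precisely the statement that $D$ is a transversal in $\cX_T$. So the whole proof reduces to unwinding the three conditions in Definition~\ref{def:D-critical} ($D\neq\emptyset$; Dominator wins on $(T,D)$; Staller wins on $(T,D\setminus\{v\})$ for every $v\in D$) into the transversal language and comparing them with the definition of a minimal transversal.

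First I would prove the forward direction. Assume $(T,D)$ is MBD Dominator-critical. Since Dominator wins on $(T,D)$, by Theorem~\ref{thm:tree-St-wins} the set $D$ is a transversal in $\cX_T$; in particular $\cX_T$ has a transversal, so it cannot be that $\cX_T$ has an empty hyperedge, and since $D\neq\emptyset$ is required, $\cX_T$ is nonempty as a hypergraph (I should be careful to note that ``empty hypergraph'' here means one with no hyperedges, i.e.\ $T$ contains no substructure at all; if $\cX_T$ had a hyperedge that were the empty set this would be a degenerate issue, but by Definition~\ref{def:cS} and the fact that every nontrivial substructure contains at least two black leaves, each hyperedge $X(S)$ is nonempty, so this cannot occur). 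To see minimality, fix any $v\in D$. The Dominator-critical hypothesis says Staller wins on $(T,D\setminus\{v\})$, so by Theorem~\ref{thm:tree-St-wins} the set $D\setminus\{v\}$ is \emph{not} a transversal in $\cX_T$. Hence removing any single element of $D$ destroys the transversal property, which is exactly minimality of $D$.

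Conversely, I would assume $\cX_T$ is nonempty and $D$ is a minimal transversal. Minimality forces $D\neq\emptyset$ (the empty set is a transversal only of a hypergraph with no hyperedges, which we have excluded). Because $D$ is a transversal, it meets $X(F)$ for every substructure $F$, so by Theorem~\ref{thm:tree-St-wins} Dominator wins on $(T,D)$. For the criticality, fix $v\in D$; minimality of $D$ means $D\setminus\{v\}$ is not a transversal, so some substructure $F$ has $X(F)\cap(D\setminus\{v\})=\emptyset$, and Theorem~\ref{thm:tree-St-wins} yields that Staller wins on $(T,D\setminus\{v\})$. Thus all three conditions of Definition~\ref{def:D-critical} hold and $(T,D)$ is MBD Dominator-critical.

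The proof is essentially a clean two-way translation, so there is no deep combinatorial obstacle; the one point deserving genuine care is the bookkeeping around the word ``empty.'' I would make sure to distinguish the hypergraph $\cX_T$ having no hyperedges (the case $T$ admits no substructure, where Staller can never win and no Dominator-critical $D$ exists, matching the exclusion in the statement) from the empty set being or failing to be a transversal. Verifying that every hyperedge $X(S)$ is itself nonempty—which follows because each substructure in $\cS$ of order at least $2$ contributes at least two black leaves, and $X(P_1)=V(P_1)$ is a single black vertex—guarantees that ``$D$ is a minimal transversal'' correctly captures both $D\neq\emptyset$ and the per-vertex criticality without edge cases slipping through.
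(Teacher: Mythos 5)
Your proof is correct and takes essentially the same route as the paper: both directions are a direct translation of Definition~\ref{def:D-critical} through Theorem~\ref{thm:tree-St-wins} into the language of (minimal) transversals of $\cX_T$. The only cosmetic point is that in the forward direction the nonemptiness of $\cX_T$ should be justified by the fact that Staller wins on $(T,D\setminus\{v\})$ for some $v\in D$, which by Theorem~\ref{thm:tree-St-wins} forces $T$ to contain a substructure, rather than deduced from $D\neq\emptyset$ alone.
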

\begin{proof}
   By Theorem~\ref{thm:cS}, Staller wins on a tree $T$ if and only if $\cX_T$ is not an empty hypergraph. Observation~\ref{obs:D-critical} (ii) then gives the same condition for the existence of a set $D$ such that the predominated graph $(T,D)$ is MBD Dominator-critical.
   So we may assume that $T$ contains at least one substructure and $\cX_T$ is not empty.

   If $(T,D)$ is MBD Dominator-critical, Dominator wins on $(T,D)$ and consequently, by Theorem~\ref{thm:tree-St-wins}, $D$ contains at least one black vertex from every substructure. Equivalently, $D$ is a transversal in $\cX_T$. The criticality also implies that Staller wins on $(T, D \setminus \{v\} )$ for every $v \in D$. 
   Again, by Theorem~\ref{thm:tree-St-wins}, the latter is equivalent to the property that no proper subset of $D$ is a transversal in $\cX_T$. It proves that $D$ is a minimal transversal. 
   
   The other direction of the statement can be proved similarly by referring to Theorem~\ref{thm:tree-St-wins}.  \qed
\end{proof}


\subsection{Families of Maker-Breaker critical hypergraphs}

In Section~\ref{sec:critical-hgs}, we defined (atomic) Maker-Breaker critical hypergraphs and made some basic observations. Now, having our results on MBD critical graphs in hand, we can add some further statements related to critical hypergraphs.

It is clear by definitions and Observation~\ref{obs:closed-neigh-hg-predomination} that if $(G,D)$ is an MBD critical (resp.,~atomic MBD critical) graph, then $\cN(G,D)$ is Maker-Breaker critical (resp.,~atomic Maker-Breaker critical) hypergraph. This interplay and Theorem~\ref{thm:critical-trees} give us a family of hypergraphs that are atomic Maker-Breaker critical. To have a more transparent description of this family, we rephrase~\cite[Definition 3.2]{bujtas-2023} which defines the family $\cS$. (Recall that, by \cite[Proposition 3.4]{bujtas-2023}, our Definition~\ref{def:cS} and ~\cite[Definition 3.2]{bujtas-2023} give the same family $\cS$ of substructures.) 

Let $G_1$ and $G_2$ be two disjoint graphs, and let $z$ be a vertex not in $V(G_1)\cup V(G_2)$. Let further $x_i\in V(G_i)$, $i\in [2]$. Then $Z = (G_1,x_1)+(G_2,x_2)$ is the graph with $V(Z) = V(G_1)\cup V(G_2)\cup\{z\}$ and $E(Z) = E(G_1)\cup E(G_2) \cup\{zx_1, zx_2\}$. 

\begin{definition}
$\cS$ is the minimal family of graphs that satisfies the following conditions: 
\begin{enumerate}
    \item[(i)] $P_1\in \cS$ with $X(P_1) = V(P_1)$. 
    \item[(ii)] If $S_1, S_2\in \cS$ and $z_i\in X(S_i)$, $i\in [2]$, then $Z = (S_1,z_1)+(S_2,z_2)\in \cS$ with $X(Z) = X(S_1)\cup X(S_2)$. 
\end{enumerate}
\end{definition}

Given two vertex disjoint hypergraphs $\cH_1$ and $\cH_2$ with specified edges $e_1$ from $\cH_1$ and $e_2$ from $\cH_2$, the hypergraph $\cH=(\cH_1, e_1) + (\cH_2, e_2)$ is defined on the vertex set $V(\cH_1) \cup V(\cH_2) \cup \{z\}$ (where $z$ is a new vertex) and with the edge set
\[ E(\cH)= E(\cH_1) \cup E(\cH_2) \cup \{e_1 \cup \{z\}, e_2 \cup \{z\} \} \setminus \{ e_1, e_2 \};
\]
that is, the new vertex $z$ is added to the specified edges to connect $\cH_1$ and $\cH_2$.

Further, $\cH^1$ denotes the $1$-uniform hypergraph with exactly one vertex and exactly one ($1$-element) edge.

\begin{definition} \label{def:cL}
    Let $\cL$ be the minimal family that satisfies the following conditions:
    \begin{itemize}
        \item[$(i)$] $\cH^1 \in \cL$.
        \item[$(ii)$] If $\cH_1 \in \cL$ and $\cH_2 \in \cL$, and $e_i$ is an arbitrary edge in $\cH_i$, for $i \in [2]$, then the hypergraph $(\cH_1, e_1) + (\cH_2, e_2)$ also belongs to $\cL$.
    \end{itemize}
\end{definition}

Theorem~\ref{thm:critical-trees} then implies the following statement.
\begin{proposition} \label{prop:cL}
   Every hypergraph in $\cL$ is an atomic Maker-Breaker critical hypergraph.
\end{proposition}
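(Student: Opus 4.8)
\textbf{Proof proposal for Proposition~\ref{prop:cL}.}

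The plan is to establish a precise correspondence between the family $\cL$ of hypergraphs and the family $\cS$ of subdivided trees, and then to invoke Theorem~\ref{thm:critical-trees}~(ii) together with the already-noted interplay between atomic MBD critical graphs $(G,D)$ and atomic Maker-Breaker critical hypergraphs $\cN(G,D)$. Concretely, I would prove by structural induction on the recursive construction of $\cL$ that for every $\cH \in \cL$ there is a tree $S \in \cS$ with $\cH \cong \cN\big(S, V(S)\setminus X(S)\big)$, where the isomorphism matches the hyperedges of $\cH$ with the closed neighborhoods of the black (fixed-degree) vertices of $S$. Once this is in place, Theorem~\ref{thm:critical-trees}~(ii) says $\big(S, V(S)\setminus X(S)\big)$ is atomic MBD critical, and the observation recorded just before the rephrased definition of $\cS$ immediately yields that $\cN\big(S, V(S)\setminus X(S)\big)$, hence $\cH$, is atomic Maker-Breaker critical.

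The base case is transparent: $\cH^1$, the single vertex with its single $1$-element edge, corresponds to $S(P_1)=P_1$ with $X(P_1)=V(P_1)$, whose closed-neighborhood hypergraph $\cN(P_1,\emptyset)$ is exactly $\cH^1$. For the inductive step I would compare the two ``$+$'' operations side by side. On the graph side, $Z=(S_1,z_1)+(S_2,z_2)$ inserts a new vertex $z$ adjacent to the chosen black vertices $z_1,z_2$; since $z$ has degree $2$ and sits between two black vertices, it becomes a white (subdivision) vertex of $Z$, and $X(Z)=X(S_1)\cup X(S_2)$. On the hypergraph side, $(\cH_1,e_1)+(\cH_2,e_2)$ adds a new vertex $z$ to two specified edges. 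The key is to check that these operations commute with $\cN$: under the inductive identification $e_i = N_{S_i}[z_i]$ for black vertices $z_i$, adding $z$ to $e_1$ and $e_2$ reflects exactly the effect that attaching $z$ has on the closed neighborhoods $N[z_1]$ and $N[z_2]$ in $Z$, and the new white vertex $z$ of $Z$ contributes no new hyperedge (its closed neighborhood is not a winning set since $z$ is predominated). I would also verify the two new edges $N_Z[z_1]=e_1\cup\{z\}$ and $N_Z[z_2]=e_2\cup\{z\}$ are precisely the edges prescribed in $(\cH_1,e_1)+(\cH_2,e_2)$, while all other closed neighborhoods are unchanged.

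The main obstacle I anticipate is the bookkeeping needed to confirm that the correspondence $\cH \leftrightarrow S$ is faithful in both directions and that \emph{every} $S\in\cS$ arises this way (so the correspondence is not merely one-way). Since $\cS$ and $\cL$ are both defined as minimal families closed under a binary gluing operation anchored at a chosen fixed-degree vertex / chosen edge, a clean simultaneous induction should match the two recursions step for step; the delicate point is ensuring that the ``choose an edge $e_i$ of $\cH_i$'' freedom on the hypergraph side lines up with the ``choose a black vertex $z_i\in X(S_i)$'' freedom on the graph side, which follows once we know the hyperedges of $\cN(S_i, V(S_i)\setminus X(S_i))$ are in bijection with the black vertices of $S_i$. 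After establishing the isomorphism, the criticality transfer is immediate from Theorem~\ref{thm:critical-trees}~(ii) and the atomicity is inherited because the predominated set is exactly one bipartition class and contains no isolated vertices.
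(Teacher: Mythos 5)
Your proposal is correct and follows exactly the route the paper intends: identifying each $\cH\in\cL$ with $\cN(S,V(S)\setminus X(S))$ for some $S\in\cS$ by matching the two recursive ``$+$'' constructions, and then invoking Theorem~\ref{thm:critical-trees}~(ii) together with the stated interplay between atomic MBD critical graphs and atomic Maker-Breaker critical hypergraphs (the paper leaves these details implicit, merely asserting that Theorem~\ref{thm:critical-trees} implies the statement). Your concern about surjectivity of the correspondence onto $\cS$ is unnecessary for this direction of the claim, but it does no harm.
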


Theorem~\ref{thm:cA} generalizes Theorem~\ref{thm:cC} by stating that the predominated graph $(A, V(A) \setminus X(A))$ is atomic MBD critical for all $A \in \cA$. From this theorem and Observation~\ref{obs:closed-neigh-hg-predomination} we can derive the following statement.
\begin{proposition} \label{prop:cactus-hg}
   For every  $A \in \cA$, the hypergraph $\cN(A, V(A) \setminus X(A))$ is an atomic Maker-Breaker critical hypergraph.
\end{proposition}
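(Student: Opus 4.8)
The plan is to deduce the statement from Theorem~\ref{thm:cA} together with the game-to-hypergraph dictionary of Observation~\ref{obs:closed-neigh-hg-predomination}; indeed, the proposition is nothing but the specialization to the family $\cA$ of the general principle, recorded earlier in this section, that an atomic MBD critical graph yields an atomic Maker-Breaker critical closed-neighborhood hypergraph. Writing $D = V(A)\setminus X(A)$, Theorem~\ref{thm:cA} gives that $(A,D)$ is atomic MBD critical, and it remains only to transfer both the criticality and the atomicity across Observation~\ref{obs:closed-neigh-hg-predomination}.

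First I would record the structure of $\cN(A,D)$. Its vertex set is $V(A)$ and its edge set is $\{N_A[v] : v \in X(A)\}$, since $X(A)$ is exactly the set of non-predominated vertices. By Definition~\ref{def:cA}, the graph $A$ is bipartite and $X(A)$ is one of its two partite classes; hence any two distinct vertices $u,w \in X(A)$ are non-adjacent, so $w \notin N_A[u]$ while $w \in N_A[w]$, giving $N_A[u]\neq N_A[w]$. Thus the assignment $v \mapsto N_A[v]$ is a bijection from $X(A)$ onto the edge set of $\cN(A,D)$, and for each $v \in X(A)$ we have $\cN(A, D\cup\{v\}) = \cN(A,D) - N_A[v]$, i.e.\ the hypergraph obtained by deleting the single edge $N_A[v]$.

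Next I would invoke Observation~\ref{obs:closed-neigh-hg-predomination} to translate the game outcomes. Since Staller wins on $(A,D)$, Maker wins on $\cN(A,D)$; and since Dominator wins on $(A,D\cup\{v\})$ for every $v \in X(A) = V(A)\setminus D$, Breaker wins on $\cN(A,D) - N_A[v]$ for every such $v$. Because the edges of $\cN(A,D)$ are precisely the sets $N_A[v]$ with $v \in X(A)$, via the bijection above, this says exactly that Maker wins on $\cN(A,D)$ while Breaker wins on $\cN(A,D)-e$ for every edge $e$; that is, $\cN(A,D)$ is Maker-Breaker critical.

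Finally I would verify atomicity, i.e.\ that $\cN(A,D)$ has no isolated vertex. Every black vertex $v \in X(A)$ lies in the edge $N_A[v]$, hence is non-isolated. A white vertex $w \in D$ is, by the atomicity of $(A,D)$, not an isolated vertex of $A$, and as $A$ is bipartite with $w$ in the white class, every neighbor of $w$ is black; choosing a black neighbor $v$ of $w$ gives $w \in N_A[v]$, so $w$ too lies in an edge. Thus $\cN(A,D)$ has no isolated vertex and is therefore an atomic Maker-Breaker critical hypergraph, as claimed. The only point requiring care — which I would flag as the crux rather than a genuine obstacle — is the bijection between non-predominated vertices and hyperedges, since without injectivity the deletion of an edge need not correspond to predominating a single vertex; the bipartiteness of $A$ and the fact that $X(A)$ is a partite class resolve this cleanly.
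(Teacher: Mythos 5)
Your proposal is correct and follows exactly the route the paper intends: it derives the statement from Theorem~\ref{thm:cA} via Observation~\ref{obs:closed-neigh-hg-predomination}, which is all the paper itself offers (the paper merely asserts the derivation together with the earlier remark that atomic MBD critical graphs yield atomic Maker-Breaker critical closed-neighborhood hypergraphs). Your explicit verification of the injectivity of $v \mapsto N_A[v]$ and of the absence of isolated vertices fills in details the paper leaves implicit, and both checks are sound.
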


\section*{Acknowledgments}

Csilla Bujt\'as, Pakanun Dokyeesun, and Sandi Klav\v zar were supported by the Slovenian Research and Innovation Agency (ARIS) under the grants P1-0297, N1-0355, and N1-0285. 
Milo\v{s} Stojakovi\'{c} was partly supported by Ministry of Science, Technological Development and Innovation of Republic of Serbia (Grants 451-03-137/2025-03/200125 \& 451-03-136/2025-03/200125), and partly supported by Provincial Secretariat for Higher Education and Scientific Research, Province of Vojvodina (Grant No.~142-451-2686/2021).


\end{document}